      \newenvironment{bvec}[1]{\begin{BMAT}(r)[1pt]{c}{#1}}{\end{BMAT}}
      \newenvironment{pbvec}[1]{\left(\!\begin{bvec}{#1}}{\end{bvec}\!\right)}
      \theoremstyle{plain}
      \newtheorem{theorem}{Theorem}[section]
      \theoremstyle{definition}
      \theoremstyle{remark}
      \def\@setcopyright{}
      \def\serieslogo@{}
\def\BState{\State\hskip-\ALG@thistlm}
\def\mydashbox#1#2{%
\setbox0\hbox{#2}%
\dimen0\ht0
\advance\dimen0\dp0
\setbox2\vbox to \dimen0{{\color{#1}\leaders\vbox{\vskip2pt\hrule height 2pt width .3pt}\vfill}}%
\ht2=\ht0
\dp2=\dp0
\box2
\unhbox0
}
        \renewcommand{\algocf@Vsline}[1]{%
                \strut\par\nointerlineskip%
                \algocf@push{\skiprule}%
                \hbox{%
            \mydashbox{black}{%
                        \vtop{\algocf@push{\skiptext}%
                        \vtop{\algocf@addskiptotal\advance\hsize by -\skiplength #1}}%
                }%
            }
                \algocf@pop{\skiprule}%
        }
\begin{document}

   \author{$\;\;\,$Greg Hurst\,*}
   \address{WOLFRAM RESEARCH INC., 100 TRADE CENTER DRIVE, CHAMPAIGN, IL 61820, U.S.A.}
   \email{ghurst@wolfram.com}
   \footnotetext{*Wolfram Research Inc., 100 Trade Center Drive, Champaign, IL 61820, U.S.A.}
   \footnotetext{\emph{E-mail address}: \href{mailto:ghurst@wolfram.com}{\texttt{ghurst@wolfram.com}}}
   

   \title[Computing the Mertens Function]{Computations of the Mertens Function and Improved Bounds on the Mertens Conjecture}
   
   \begin{abstract}
     The Mertens function is defined as $M(x) = \sum_{n \leq x} \mu(n)$, where $\mu(n)$ is the M\"obius function. 
     The Mertens conjecture states $|M(x)/\sqrt{x}| < 1$ for $x > 1$,
     which was proven false in 1985 by showing $\liminf M(x)/\sqrt{x} < -1.009$ and $\limsup M(x)/\sqrt{x} > 1.06$.
     The same techniques used were revisited here with present day hardware and algorithms, giving improved lower and upper bounds of $-1.837625$ and $1.826054$.
     In addition, $M(x)$ was computed for all $x \leq 10^{16}$, recording all extrema, all zeros, and $10^8$ values sampled at a regular interval. 
     Lastly, an algorithm to compute $M(x)$ in $O(x^{2/3+\varepsilon})$ time was used on all powers of two up to $2^{73}$.
   \end{abstract}



   \maketitle



   \section{Introduction}
   \label{intro}
   
   The M\"obius function $\mu(n)$ is an arithmetic function defined by
   
   $$ \mu(n) = \begin{cases}(-1)^{\omega(n)} & \mbox{ if } n \mbox{ is a square-free integer}\\ 0&\mbox{ otherwise },\end{cases} $$
   where $\omega(n)$ is the number of prime factors of $n$. The Mertens function is the summatory function of the M\"obius function, i.e. $$ M(x) = \sum_{n \leq x} \mu(n). $$
   This is a well known function in number theory, appearing in many identities. Its Mellin transform gives 
   $$ \frac{1}{\zeta(s)} = s \int_1^\infty M(x)x^{-s-1}dx \;\;\text{ for }\;\; \text{Re}(s) > 1, $$
   where $\zeta(s)$ is the Riemann zeta function.
   If $M(x) = O(x^{1/2+\varepsilon})$, the integral would converge for $\text{Re}(s) > 1/2$, implying that $1/\zeta(s)$ has no poles in this region and that the Riemann hypothesis is true.
   Conversely, if $M(x) = \nolinebreak \Omega(x^\alpha)$ for some $\alpha > 1/2$, then the Riemann hypothesis is false.
   
   Defining $q(x) = M(x)/\sqrt{x}$, a long standing conjecture of Mertens stated $|q(x)| < 1$ for $x > 1$.
   In 1985 this was shown to be false by Odlyzko and te Riele who showed $\liminf q(x) < -1.009$ and $\limsup q(x) > 1.06$ \cite{OR}.
   However no explicit counterexample was found. Since then Best and Trudgian have improved these bounds to $\liminf q(x) < -1.6383$ and $\limsup q(x) > 1.6383$ \cite{BT}.
   This paper describes techniques similar to those of Odlyzko and te Riele and establishes $\liminf q(x) < -1.837625$ and $\limsup q(x) > 1.826054$.
   
   To better understand $M(x)$ and $q(x)$, some have computed $M(x)$ at every integer up to a given bound.
   The most recent and extensive results are due to Kotnik and van de Lune, who computed $M(x)$ for all $x \leq 10^{14}$ \cite{KL}.
   In this paper, these results are extended by computing $M(x)$ for all $x \leq 10^{16}$. For $x$ in this range
   \newlength{\mylen}
   \setbox1=\hbox{$\bullet$}\setbox2=\hbox{\tiny$\bullet$}
   \setlength{\mylen}{\dimexpr0.5\ht1-0.5\ht2}
   \renewcommand\labelitemi{\raisebox{\mylen}{\tiny$\bullet$}}
   \begin{itemize}
     \item all extrema
     \item all zeros of $M(x)$ (366\,567\,325 in total)
     \item all values of $M(x)$ for $x$ a multiple of $10^8$
   \end{itemize}
   are reported.
   
   Finally, an algorithm is discussed that was used to compute $M(2^n)$ for all positive integers $n \leq 73$, including
   $$ M(2^{73}) = -6524408924. $$
   
   
   Section 2 describes the sieve used to compute $M(n)$ for all $n \leq 10^{16}$ and used in the main algorithm in the subsequent section.
   Section 3 derives a formula and incorporates it into an algorithm used to calculate $M(x)$ at an isolated value.
   Section 4 discusses the machinery used to derive bounds on $|q(x)|$. This entails analytic formulas relating to $M(x)$ and a lattice basis reduction scheme.
   Section 5 discusses all implementation details, which include low level tricks to speed up common calculations and the choice of hardware specific parameters.
   Section 6 presents and discusses the results of the computations. These include extrema of $M(x)$, properties of the zeros of $M(x)$, the values of $M(x)$ at isolated values, and various bounds on $q(x)$.
   Finally, section 7 summarizes all results and considers possible extensions.

   \section{Sieving Algorithm}
   \label{sieve}
   
   \noindent The functions $\mu(n)$ and $M(n)$ can be computed naively for all $n \leq x$ as follows \cite{DR}:
   \vspace{-10pt}
   \begin{algorithm}
     \SetArgSty{}
     \DontPrintSemicolon
     Compute and store all primes $p \leq \sqrt{x}$\;
     Initialize an array $m$ of $1$'s of length $\lfloor x \rfloor$\;
     \For{each prime $p \leq \sqrt{x}$}{
       For all $1 \leq n \leq x$ divisible by $p$, set $m[n] \leftarrow -p \cdot m[n]$\;
       For all $1 \leq n \leq x$ divisible by $p^2$, set $m[n] \leftarrow 0$\;
     }
     \For{$1 \leq n \leq x$}{
       If $m[n] = 0$, do nothing\;
       If $|m[n]| = n$, set $m[n] \leftarrow \text{sign}(m[n])$\;
       Otherwise, set $m[n] \leftarrow -\text{sign}(m[n])$\;
     }
     The array $m$ now stores $\mu(n)$ at position $n$\;
     Cumulatively add the values in $m$ into another array. This array stores $M(n)$ at position $n$\;
   \end{algorithm}
   \vspace{-10pt}

   \noindent The runtime complexity of this sieve is determined by the first loop and is $$ O\bigg( \sum_{p \leq \sqrt{x}} \left( \frac{x}{p} + \frac{x}{p^2} \right) \bigg) = O(x \log\log x). $$
   
   There are two problems that render this algorithm impractical for large $x$.
   The first is that it requires $O(x \log\log x)$ multiplications, which can be costly. 
   The second is that the array $m$ must contain integers rather than bytes, which is less cache friendly. 
   The problem of cache misses is discussed in further detail in section \ref{implementation}.
   To address these issues a variation of this algorithm, similar to the one described in \cite{EK}, is used.
   Define $\theta(x)$ as the unit step function and $lsb(x)$ as the least significant bit of $x$, and sieve as follows:
   \begin{algorithm}
     \SetArgSty{}
     \DontPrintSemicolon
     Create byte-arrays $l$ of length $\lfloor \sqrt{x} \rfloor$ and $m$ of length $\lfloor x \rfloor$\;
     \For{$1 \leq j \leq \sqrt{x}$}{
       $l[j] \leftarrow \lfloor \log_2 p_j \rfloor | 1$, where $p_j$ is the $j$th prime and $|$ is bitwise OR\;
     }
     \For{$1 \leq n \leq x$}{
       $m[n] \leftarrow 0\texttt{x}80$ (set the most significant bit to 1 and the rest to 0)\;
     }
     \For{$1 \leq j \leq \sqrt{x}$}{
       For all $1 \leq n \leq x$ divisible by $p_j$, set $m[n] \leftarrow l[j] + m[n]$\;
       For all $1 \leq n \leq x$ divisible by $p_j^2$, set $m[n] \leftarrow 0$\;
     }
     \For{$1 \leq n \leq x$}{
       If the leading bit in $m[n]$ is $0$, set $m[n] \leftarrow 0$\;
       If $m[n] < \lfloor \log_2 n \rfloor - 5 - 2\theta(n-2^{20})$, set $m[n] \leftarrow 2lsb(m[n]) - 1$\;
       Otherwise, set $m[n] \leftarrow 1 - 2lsb(m[n])$\;
     }
   \end{algorithm}

   The idea of this algorithm is the same as the first one, except it works in log-space. 
   This allows multiplication to be replaced with addition and data to be stored in byte-arrays. 
   Though the time complexity remains the same, these changes reduce implementation overhead.
   
   After the third loop, the leading bit of each element $m[n]$ indicates whether $n$ is divisible by a square. This leaves 7 bits in $m[n]$ to add logarithms.
   Fortunately for all $n \leq 10^{16}$, the maximum possible amount of logarithms that can be added will not overflow to the eighth bit.
   In fact overflow will not occur until about $n = 10^{30}$. The least significant bit of each element $m[n]$ counts the parity of the number of prime factors encountered. If it is 0 there were an even amount and if it is 1 there were an odd amount. 
   This is achieved by setting the least significant bit in each element of $l$ to 1.
   
   Finally, logarithms are summed to determine if $n$ has a prime factor larger than $\sqrt{n}$ that was not accounted for in the sieve. 
   For $n \leq 2^{20}$, all primes will be accounted for if and only if $\sum_j \lfloor \log_2 p_j \rfloor | 1 < \lfloor \log_2 n \rfloor - 5$, 
   where all cases can be verified exhaustively. The validity for larger $n$ is shown by the following theorem.
   \begin{theorem}
   If $2^{20} < n \leq 10^{16}$, and $n = p_1 \cdots p_k$ is square-free, then
   \begin{equation} \label{1}
   \sum_{j = 1}^k \lfloor \log_2 p_j \rfloor | 1 \geq \lfloor \log_2 n \rfloor - 7
   \end{equation}
   and
   \begin{equation} \label{2}
   \sum_{j = 1}^{k-1} \lfloor \log_2 p_j \rfloor | 1 < \lfloor \log_2 n \rfloor - 7 \;\; \text{ when } \;\; p_k > \sqrt{n}.
   \end{equation}
   \end{theorem}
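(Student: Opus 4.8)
The plan is to recast both inequalities in a multiplicative, floor‑free form, reduce \eqref{1} to a bound on $\omega(n)$, and close the few remaining cases by a finite check. Write $f(p)=\lfloor\log_2 p\rfloor\,|\,1$ and set $r(p):=p/2^{f(p)}$. Since $\lfloor\log_2 p\rfloor\le f(p)\le\lfloor\log_2 p\rfloor+1$ we have $\tfrac12<r(p)<2$, with $r(p)\ge 1$ precisely when $\lfloor\log_2 p\rfloor$ is odd (call such $p$ \emph{odd-range}), $r(2)=1$, and $r(p)<1$ for even-range odd primes. Every $f(p)$ is odd, so $\sum_j f(p_j)$ is an integer, and for any integer $m$ one has $m\ge\lfloor\log_2 n\rfloor-7\iff n<2^{m+8}$ and $m<\lfloor\log_2 n\rfloor-7\iff n\ge 2^{m+8}$. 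Because $n$ is squarefree, $\prod_{p\mid n}2^{f(p)}=n/\prod_{p\mid n}r(p)$; hence \eqref{1} is equivalent to $\prod_{p\mid n}r(p)<256$, and — using that $p_k>\sqrt n$ makes $p_k$ the unique prime factor exceeding $P:=n/p_k=\prod_{j<k}p_j$ — \eqref{2} is equivalent to $p_k\prod_{j<k}r(p_j)\ge 256$.

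For \eqref{1}: since $r(p)<2$ always, $\prod_{p\mid n}r(p)<2^{\omega(n)}$, so the bound is automatic once $\omega(n)\le 8$. Deleting from $n$ the factor $2$ and all even-range prime factors (each contributing $r(p)\le 1$) only increases $\prod_{p\mid n}r(p)$, so it suffices to treat $n$ a product of distinct odd-range primes $\ge 3$; the eleven smallest of these are $3,11,13,37,41,43,47,53,59,61,131$, whose product exceeds $10^{16}$, so $\omega(n)\le 10$. For $\omega(n)\in\{9,10\}$ I would pass to logarithms, so the problem becomes: maximize $\sum_{p\mid n}\{\log_2 p\}$ subject to $\sum_{p\mid n}\log_2 p\le\log_2 10^{16}<53.2$. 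Grouping the prime factors by the dyadic block $[2^{2m+1},2^{2m+2})$ containing them, each block adds to the objective at most the sum of the largest fractional parts $\{\log_2 p\}$ of its primes and adds at least $2m+1$ per prime to the constraint; the key point is that the blocks $m=0,1$ contain only $\{3\}$ and $\{11,13\}$, whose logarithms have fractional parts bounded well away from $1$, so a large objective is expensive in the size constraint. A finite check over the admissible ways to distribute $9$ or $10$ prime factors among the blocks then gives $\sum_{p\mid n}\{\log_2 p\}<8$ (the maximum is about $7.2$), i.e. $\prod_{p\mid n}r(p)<256$.

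For \eqref{2}: from $P^2<p_kP=n\le10^{16}$ we get $P<10^8$, and since $P$ is a product of $k-1$ distinct primes while the first nine primes already multiply past $10^8$, $k\le 9$. If $k\le 3$: $\prod_{j<k}r(p_j)>2^{-(k-1)}$ and $p_k>2^{10}$ (because $p_k^2>n>2^{20}$), so $p_k\prod_{j<k}r(p_j)>2^{11-k}\ge 256$. If $k\ge 7$: $P$ is at least the product of the first $k-1$ primes, which exceeds $2^{k+7}$, so $p_k>2^{k+7}$, and with $\sum_{j<k}f(p_j)\le\lfloor\log_2 P\rfloor+(k-1)\le\log_2 P+(k-1)$ this yields $\sum_{j<k}f(p_j)<\log_2 p_k+\log_2 P-8=\log_2 n-8$, which is \eqref{2}. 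The remaining $k\in\{4,5,6\}$ I would settle by refining that estimate with a small enumeration: using that each $f(p)$ is odd and $\sum_{j<k}\lfloor\log_2 p_j\rfloor\le\lfloor\log_2 P\rfloor$, one shows that when $P$ is below a suitable threshold $\sum_{j<k}f(p_j)$ is so small that the hypothesis $n>2^{20}$ already gives $\sum_{j<k}f(p_j)\le\log_2 n-8$, while when $P$ exceeds the threshold $p_k>P$ makes $\log_2 n>2\log_2 P$ big enough; e.g. for $k=4$, three distinct primes whose $\lfloor\log_2\rfloor$'s are all even and sum to $10$ have product $\ge1615>2^{10.5}$, so $P<2^{10.5}$ forces $\sum_{j<4}f(p_j)\le11$ (hence $<\log_2 n-8$ as $n>2^{20}$), and $P\ge2^{10.5}$ makes $\log_2 n>2\log_2 P$ large enough that $\sum_{j<4}f(p_j)\le\lfloor\log_2 P\rfloor+3\le\log_2 n-8$.

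I expect the real work to be exactly these two finite case analyses — $\omega(n)\in\{9,10\}$ for \eqref{1} and $k\in\{4,5,6\}$ for \eqref{2} — since there the crude bounds $\prod_{p\mid n}r(p)<2^{\omega(n)}$ and $\sum_{j<k}f(p_j)\le\log_2 P+(k-1)$ are not quite sharp, and the missing slack has to be extracted from the scarcity of small primes in the low dyadic blocks together with the constraint $n\le10^{16}$; everything else is bookkeeping with floors and the identity $\lfloor\log_2(ab)\rfloor\in\{\lfloor\log_2 a\rfloor+\lfloor\log_2 b\rfloor,\ \lfloor\log_2 a\rfloor+\lfloor\log_2 b\rfloor+1\}$.
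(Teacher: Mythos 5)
Your proposal is correct and, like the paper, ultimately rests on finite case checks near the extremes; but the reductions you use to get there are genuinely different and in places tighter than the paper's. For \eqref{1} the paper gives only a heuristic for where the worst case should live (many prime factors, all $\lfloor\log_2 p_j\rfloor$ odd, each $p_j$ just under a power of $2$) and then reports the outcome of a manual search, exhibiting the extremal $n=3\cdot11\cdot13\cdot53\cdot59\cdot61\cdot229\cdot241\cdot251$ with deviation $-7$ and noting the first $-8$ occurs only near $1.16\cdot10^{18}$; your multiplicative reformulation $\prod_{p\mid n} r(p)<256$ with $r(p)=p\,2^{-(\lfloor\log_2 p\rfloor\,|\,1)}$, the observation that the factor $2$ and all even-range factors may be discarded, and the explicit bound $\omega(n)\le 10$ coming from the eleven smallest odd-range primes turn that heuristic into an a priori delimitation of the search space, which is a real gain in rigor (and your estimated maximum of about $7.2$ for $\sum\{\log_2 p\}$ is consistent with the paper's data). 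For \eqref{2} the paper instead uses a single inequality chain, $\sum_{j<k}\lfloor\log_2 p_j\rfloor\,|\,1\le\lfloor\log_2 n\rfloor-7+(k+6-\log_2\sqrt n\,)$, and disposes of all large $k$ at once via $\log_2\sqrt{p_j}>2$ for $p_j\ge 17$, leaving a finite residue to check by hand; your three-way split on $k$ (the bound $p_k>2^{10}$ for $k\le3$, the primorial bound for $k\ge7$, and the threshold on $P=n/p_k$ combined with the parity of $\lfloor\log_2 p\rfloor$ for $k\in\{4,5,6\}$) reaches an equivalent finite residue by a different route. Both of your deferred enumerations ($\omega(n)\in\{9,10\}$ and $k\in\{4,5,6\}$) are genuinely finite and no less explicit than the paper's own ``check manually'' steps, so there is no gap relative to the paper's standard of rigor; the trade-off is that the paper's argument for \eqref{2} is shorter, while your argument for \eqref{1} is the one a reader could verify without redoing the author's search.
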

   \begin{proof}
   To show (\ref{1}) is true, a value of $n$ is sought that gives a sum which deviates below $\log_2 n$ as far as possible. 
   This will happen when there are many prime factors (allowing for more error), all $\lfloor \log_2 p_j \rfloor$ are odd (so the bitwise OR won't increment the sum), 
   and each $p_j$ is just less than a power of 2 (making the fractional part as large as possible). 
   Under these constraints there are a manageable number of cases to test manually.
   The largest deviation from $\lfloor \log_2 n \rfloor$ is $-7$ and first occurs at 
   $$ n = 3\cdot11\cdot13\cdot53\cdot59\cdot61\cdot229\cdot241\cdot251 \approx 1.13\cdot10^{15}. $$ 
   Additionally, the first occurrence of $-8$ is at 
   $$ n = 3\cdot13\cdot47\cdot53\cdot59\cdot61\cdot229\cdot239\cdot241\cdot251 \approx 1.16\cdot10^{18}, $$ 
   which means this algorithm will need to be slightly modified to reach that value.
   
   To show (\ref{2}) is true, observe 
    \begin{align*}
     \sum_{j=1}^{k-1} \lfloor \log_2 p_j \rfloor | 1 &\leq \sum_{j=1}^{k-1} \lfloor \log_2 p_j \rfloor + k-1 \\
             & \leq \log_2 n + k-1 - \log_2 p_k \\
             & \leq \lfloor \log_2 n \rfloor - 7 + (k + 6 - \log_2 \sqrt{n}\,).
   \end{align*}
   Now $$ \log_2 \sqrt{n} = \sum_{j=1}^{k} \log_2 \sqrt{p_j}, $$ and $\log_2 \sqrt{p_j} > 2$ for $j \geq 7$. 
   This leaves only a finite number of cases where $k + 6 < \log_2 \sqrt{n}$ might be false.
   Checking (\ref{2}) manually on each of these cases confirms its validity.
   \end{proof}
   
   Finally this sieving algorithm can be segmented into blocks small enough for a computer to store all generated data in RAM.
   Using a block size $B$ that is a divisor of $x$, compute $\mu(n)$ and $M(n)$ for all $(j-1)x/B + 1 \leq n \leq j x/B$ and let $j$ span from $1$ to $B$.
   For each block, only the primes up to $\sqrt{j x/B}$ need to be considered.

   \section{Combinatorial Algorithm}
   \label{combinatorial}

   To compute $M(x)$ at an isolated value, just as in \cite{EK} and \cite{DR}, start with the identity
   
   $$ \sum_{n \leq x} M(\lfloor x/n \rfloor) = 1. $$
   Observing $\lfloor x/n \rfloor$ takes on roughly $2\sqrt{x}$ distinct values, let $\nu_x = \nolinebreak \lfloor \sqrt{x}\, \rfloor$, $\kappa_x = \lfloor x/(\nu_x + 1) \rfloor$ and rewrite the identity as
   \begin{align*}
     \sum_{n \leq \kappa_x} M(x/n) &= 1 - \sum_{n \leq \nu_x} \left( \left\lfloor \frac{x}{n} \right\rfloor - \left\lfloor \frac{x}{n+1} \right\rfloor \right) M(n) \\
             &= 1 + \kappa_x M(\nu_x) - \sum_{n \leq \nu_x} \left\lfloor \frac{x}{n} \right\rfloor \mu(n).
   \end{align*}
   From an implementation standpoint, the second line is more cache friendly since the values of $\mu$ can be stored in an array of bytes.
   Moreover when $\mu(n) = 0$, the quotient it is multiplied by does not need to be computed.
      
   For any $\nu_x < u < x$ define
   $$ S(y, u) = 1 - \sum_{y/u < n \leq \kappa_y} M(y/n) + \kappa_y M(\nu_y) - \sum_{n \leq \nu_y} \left\lfloor \frac{y}{n} \right\rfloor \mu(n), $$
   which gives
   $$ \sum_{n \leq x/u} M(x/n) = S(x, u). $$
   Applying generalized M\"obius inversion yields the following result.
   \begin{theorem}
   \label{theorem3}
   \begin{align*}
     M(x) &= \sum_{n \leq x/u} \mu(n) S(x/n, u).
   \end{align*}
   \end{theorem}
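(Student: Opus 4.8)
The plan is to apply the generalized M\"obius inversion alluded to above --- equivalently, the classical fact that $\sum_{d \mid k} \mu(d)$ equals $1$ when $k = 1$ and $0$ otherwise --- so that the only genuine bookkeeping concerns the truncation at $u$ and the nested floor functions. I begin by noting that the identity recorded just before the statement, $\sum_{m \le y/u} M(y/m) = S(y,u)$, was derived only from $\sum_{m \le y} M(\lfloor y/m \rfloor) = 1$ (valid for every $y \ge 1$) together with the inequality $\nu_y < u$; hence it holds verbatim for every $y = x/n$ with $n$ a positive integer and $n \le x/u$, since then $x/n \le x$ forces $\nu_{x/n} \le \nu_x < u$. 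So for each such $n$,
\[
  S(x/n, u) \;=\; \sum_{\ell \le x/(nu)} M\!\left( \frac{x}{n\ell} \right),
\]
where I have used the standard identity $\big\lfloor \lfloor x/n \rfloor/\ell \big\rfloor = \lfloor x/(n\ell) \rfloor$ to collapse the iterated quotients (this is also what makes $S(x/n,u)$ and $S(\lfloor x/n \rfloor, u)$ interchangeable, both being assembled only from floors of $x/(n\ell)$).

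Next I substitute this into the right-hand side of the asserted formula:
\[
  \sum_{n \le x/u} \mu(n)\, S(x/n, u)
  \;=\; \sum_{n \le x/u} \mu(n) \sum_{\ell \le x/(nu)} M\!\left( \frac{x}{n\ell} \right).
\]
The pairs of positive integers $(n, \ell)$ occurring on the right are precisely those with $n\ell \le x/u$ (which automatically forces $n \le x/u$ as well, since $\ell \ge 1$), so the index set is $\{ (d, k/d) : k \le x/u,\ d \mid k \}$. Regrouping the double sum according to the value $k = n\ell$ gives
\[
  \sum_{n \le x/u} \mu(n)\, S(x/n, u)
  \;=\; \sum_{k \le x/u} M\!\left( \frac{x}{k} \right) \sum_{d \mid k} \mu(d).
\]
Since $\sum_{d \mid k} \mu(d)$ is $1$ for $k = 1$ and $0$ otherwise, only the term $k = 1$ survives, leaving $M(x)$, as claimed.

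I expect the one point deserving care to be the verification that the double-sum index set is exactly what is stated --- that the inner constraint $\ell \le (x/n)/u$ combined with $n \le x/u$ coincides with $n\ell \le x/u$, so that the regrouping by $k = n\ell$ neither drops nor double-counts any term --- together with the attendant floor manipulations $\big\lfloor \lfloor x/n \rfloor/\ell \big\rfloor = \lfloor x/(n\ell) \rfloor$ and $M(x/(n\ell)) = M(\lfloor x/(n\ell) \rfloor)$ (the latter because $M$ is constant on each interval $[\lfloor t \rfloor, \lfloor t \rfloor + 1)$). One should also note in passing that $S(x/n,u)$ is well defined throughout the outer sum, which is exactly the inequality $\nu_{x/n} \le \nu_x < u$ used above. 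With these checks in place the result is immediate from the Dirichlet inverse relation between $\mu$ and the constant function $1$.
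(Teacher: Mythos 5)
Your proof is correct and is exactly the argument the paper has in mind: the paper simply invokes ``generalized M\"obius inversion'' without detail, and your expansion of $S(x/n,u)$, regrouping by $k = n\ell$, and appeal to $\sum_{d\mid k}\mu(d) = [k=1]$ is precisely what that phrase unpacks to. The care you take with the index set $n\ell \le x/u$ and the nested-floor identity is the right level of rigor and introduces no gaps.
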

   Now notice when computing this summand for all $n \leq x/u$, only the \emph{square-free} $n$ need to be considered, as $\mu(n) = 0$ otherwise.
   This means that about $1-6/\pi^2 \approx 39\% $ of summands need not be computed.
   
   To find each sum within each $S$, a segmented sieve can be applied to compute all required values of $\mu$ and $M$.
   The time complexity of this algorithm is thus the time spent sieving plus the time computing each $S(x/n, u)$.
   This gives a total time complexity of $$ O\bigg(u^{1+\varepsilon} + \sum_{n \leq x/u} \nu_{x/n}\bigg) = O(u^{1+\varepsilon} + x/\sqrt{u}\,). $$
   The choice of $u = O(x^{2/3+\varepsilon})$ minimizes this runtime complexity at $O(x^{2/3+\varepsilon})$.
   When performing the sieve, a sieving block size of $O(\sqrt{u}\,)$ can be used to obtain space complexity of $O(x^{1/3+\varepsilon})$.

   \section{Analytic Algorithm}
   \label{analytic}
   
   The bounds on $\liminf q(x)$ and $\limsup q(x)$ can be extended using the approach of Odlyzko and te Riele in \cite{OR},
   which begins with the following observation.
   \begin{theorem}[Tichmarsh \cite{T}]
   \label{theorem4}
   Assuming the Riemann hypothesis and all zeros of the zeta function are simple, then for $x > 0$,
   $$ M(x) = \sum_{i = 1}^\infty \bigg( \frac{x^{\rho_i}}{\rho_i\zeta'(\rho_i)} + \frac{x^{\overline{\rho_i}}}{\overline{\rho_i}\zeta'(\overline{\rho_i})} \bigg) + R(x) + \sum_{n = 1}^\infty \frac{(-1)^{n-1}(2\pi/x)^{2n}}{(2n)!n\zeta(2n+1)}. $$
   Here $R(x) = -2$ for $x \not\in \mathbb{Z}$, $R(x) = -2 + \mu(x)/2$ for $x \in \mathbb{Z}$, and $\rho_i$ is the $i$th non trivial zero of $\zeta$ with positive imaginary part.
   \end{theorem}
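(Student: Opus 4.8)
This identity is classical and essentially due to Titchmarsh \cite{T}; the plan is to recall the standard contour-integration argument. The starting point is the Mellin pair recalled in the introduction: inverting $1/\zeta(s) = s\int_1^\infty M(t)\,t^{-s-1}\,dt$, valid for $\mathrm{Re}(s)>1$, by Perron's formula gives, for any $c>1$,
$$ \frac{1}{2\pi i}\int_{c-i\infty}^{c+i\infty}\frac{x^s}{s\,\zeta(s)}\,ds \;=\; \begin{cases} M(x), & x\notin\mathbb{Z},\\ M(x)-\tfrac{1}{2}\mu(x), & x\in\mathbb{Z}, \end{cases} $$
the boundary term being the usual Perron correction at the jump; this is exactly the discrepancy that the $\mu(x)/2$ inside $R(x)$ accounts for. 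I would work with the truncation of this integral to $[c-iT,\,c+iT]$ and send $T\to\infty$ only at the end.

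Next I would move the line of integration leftward, collecting the residues of $x^s/(s\,\zeta(s))$. Assuming the Riemann hypothesis, the only pole in $\mathrm{Re}(s)\geq 0$ besides the nontrivial zeros is $s=0$, where $1/s$ has a simple pole and $\zeta(0)=-\tfrac{1}{2}$, so the residue is $1/\zeta(0)=-2$ --- the constant part of $R(x)$. At each nontrivial zero $\rho_i$ (and its conjugate $\overline{\rho_i}$), the hypothesis that the zeros are simple makes $x^s/(s\,\zeta(s))$ have a simple pole with residue $x^{\rho_i}/(\rho_i\,\zeta'(\rho_i))$, and these assemble the main sum. Continuing past the strip, the remaining poles are the trivial zeros $s=-2n$; differentiating the functional equation $\zeta(s)=2^s\pi^{s-1}\sin(\pi s/2)\Gamma(1-s)\zeta(1-s)$ at $s=-2n$ gives $\zeta'(-2n)=(-1)^n(2n)!\,\zeta(2n+1)/(2^{2n+1}\pi^{2n})$, so the residue at $s=-2n$ equals
$$ \frac{x^{-2n}}{(-2n)\,\zeta'(-2n)} \;=\; \frac{(-1)^{n-1}(2\pi/x)^{2n}}{(2n)!\,n\,\zeta(2n+1)}, $$
which is precisely the $n$-th term of the last series. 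Summing all the residues yields the asserted formula, provided the shifted contour contributes nothing in the limit.

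The one genuinely delicate point --- the main obstacle --- is to show that the horizontal segments at heights $\pm T$ and the far-left vertical segment vanish as $T\to\infty$ and the abscissa tends to $-\infty$. One cannot take an arbitrary sequence of heights: the heights $T$ must be chosen so that $|\zeta(\sigma+iT)|$ is not too small, uniformly for $\sigma$ in a bounded interval. Such heights are plentiful because the number of zeros with ordinate in $[T,T+1]$ is $O(\log T)$, so one can always slip into a gap between consecutive ordinates; on the Riemann hypothesis this, together with the standard bound $1/\zeta(\sigma+it)\ll t^{\varepsilon}$ for $\sigma\geq\tfrac{1}{2}+\delta$ and a Littlewood-type estimate for $1/\zeta$ inside the critical strip, makes the integrand along these horizontal pieces decay like a negative power of $T$, so their contributions vanish; on the far-left segment one uses the functional equation and Stirling's formula. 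The same estimates also show that the series over the nontrivial zeros converges once each $\rho_i$ is grouped with its conjugate and the pairs are ordered by $|\mathrm{Im}\,\rho_i|$ --- its partial sums are exactly the quantities picked up along the truncated contours --- while the trivial-zero series converges absolutely and superexponentially for every $x>0$, its $n$-th term being $O\bigl((2\pi/x)^{2n}/(2n)!\bigr)$.
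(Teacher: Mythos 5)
The paper does not prove this theorem --- it is quoted from Titchmarsh --- and your sketch is a faithful reconstruction of the classical contour-shifting proof there: Perron's formula applied to $1/(s\zeta(s))$, the residue $1/\zeta(0)=-2$ at $s=0$, the residues $x^{\rho_i}/(\rho_i\zeta'(\rho_i))$ at the simple nontrivial zeros, the trivial-zero residues correctly evaluated via the functional equation, and the horizontal segments controlled by choosing heights $T$ at which $1/\zeta(\sigma+iT)$ is $O(T^{\varepsilon})$. The one caveat is that this argument establishes convergence of the zero sum only along the special sequence of truncation heights (with conjugate pairs grouped), not necessarily for arbitrary partial sums ordered by $|\mathrm{Im}\,\rho_i|$, since no lower bound on $|\zeta'(\rho_i)|$ is available to control an individual block of zeros between consecutive admissible heights --- but the paper's own statement of the theorem elides this in exactly the same way, so your write-up matches the cited source's level of precision.
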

   Grouping terms in this formula gives 
   \begin{equation} \label{3}
     q(x) = 2\sum_{i=1}^\infty a_i \cos(\gamma_i \log x + \psi_i) + O(x^{-1/2}),
   \end{equation}
   where $a_i = 1/|\rho_i \zeta'(\rho_i)|$, $\gamma_i = \text{Im}(\rho_i)$, and $\psi_i = \arg(\rho_i \zeta'(\rho_i))$.
   Now defining $f(t) = (1-t)\cos(\pi t) + \sin(\pi t)/\pi$ and
   $$ h(y, N) = 2\sum_{i = 1}^N a_i f\left( \gamma_i/\gamma_N \right) \cos(\gamma_i y + \psi_i), $$
   the following holds.
   \begin{theorem}[Ingham \cite{I}]
   For any real $y$ and any positive integer $N$, $$ \liminf q(x) \leq h(y, N) \leq \limsup q(x). $$
   \end{theorem}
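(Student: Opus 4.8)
The plan is to exhibit $h(y,N)$ as a weighted average of the values of $q$ near infinity in which every weight is nonnegative, so that it is automatically trapped between $\liminf q$ and $\limsup q$. If $\limsup_x q(x)=+\infty$ the right-hand inequality is vacuous and if $\liminf_x q(x)=-\infty$ the left-hand one is, so I may assume both are finite; since $q$ is bounded on every bounded interval (it equals $M(x)/\sqrt x$ with $M$ integer-valued), $q$ is then bounded on $[0,\infty)$, say by $B$. Throughout I work from the representation (3) (which carries the Riemann-hypothesis and simple-zeros hypotheses of Theorem~\ref{theorem4}), rewritten as $q(e^{w})=2\sum_{i\ge1}a_i\cos(\gamma_i w+\psi_i)+O(e^{-w/2})$.

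The key device is a Fej\'er-type kernel. First I would record that there is a nonnegative, even function $k\in L^1(\R)$ with $\int_\R k=1$ whose cosine transform $\widehat k(\xi)=\int_\R k(u)\cos(\xi u)\,du$ equals $f(|\xi|)$ for $|\xi|\le1$ and vanishes for $|\xi|\ge1$. This is precisely why $f(t)=(1-t)\cos(\pi t)+\sin(\pi t)/\pi$ is the right choice: $f$, $f'$ and $f''$ all vanish at $t=1$, so $\widehat k$ is a $C^2$ compactly supported function (whence $k$ is smooth with $k(u)=O(u^{-2})$), and the inverse cosine transform of $f$, extended by zero past $t=1$, works out to a perfect square and is therefore $\ge0$. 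Rescaling, put $k_N(v)=\gamma_N\,k(\gamma_N v)$; then $k_N\ge0$, $\int_\R k_N=1$, and $\int_\R k_N(v)\cos(\gamma v)\,dv=f(\gamma/\gamma_N)$ for $0\le\gamma\le\gamma_N$ and $=0$ for $\gamma>\gamma_N$.

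Now set $\mathcal A(w)=\int_\R q(e^{w+v})\,k_N(v)\,dv$. I would prove two things. First, $\mathcal A(w)=h(w,N)+o(1)$ as $w\to\infty$: insert the expansion above for $q(e^{w+v})$ and integrate against $k_N$; since $k_N$ is even only the cosine parts survive, the $i$-th zero contributing $2a_i f(\gamma_i/\gamma_N)\cos(\gamma_i w+\psi_i)$, which vanishes for $i>N$ because $f$ is supported on $[0,1]$, so the surviving sum is exactly the \emph{finite} sum $h(w,N)$, while the $O(e^{-(w+v)/2})$ term integrates to $O(e^{-w/4})$ plus a part coming from $v<-w/2$ that is $O\!\big(B\int_{v<-w/2}k_N\big)\to0$. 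Second, $\limsup_{w\to\infty}\mathcal A(w)\le\limsup_x q(x)$ and $\liminf_{w\to\infty}\mathcal A(w)\ge\liminf_x q(x)$: for $\varepsilon>0$ choose $R$ with $\int_{|v|>R}k_N<\varepsilon$; for $|v|\le R$ and $w$ large we have $q(e^{w+v})\le\limsup q+\varepsilon$, the tail $|v|>R$ contributes at most $B\varepsilon$ in absolute value, and $k_N\ge0$ has mass $1$, so $\mathcal A(w)$ cannot exceed $\limsup q$ by more than $O(\varepsilon)$, and symmetrically from below. Combining the two gives $\limsup_{w\to\infty}h(w,N)\le\limsup_x q(x)$ and $\liminf_{w\to\infty}h(w,N)\ge\liminf_x q(x)$. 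Finally $h(\cdot,N)$ is a finite trigonometric polynomial, hence a uniformly almost periodic function, so $\sup_{y}h(y,N)=\limsup_{w\to\infty}h(w,N)$ and $\inf_y h(y,N)=\liminf_{w\to\infty}h(w,N)$; therefore $\liminf q(x)\le h(y,N)\le\limsup q(x)$ for every real $y$.

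The main obstacle is justifying the term-by-term integration in the first step, since the sum over zeros in (3) converges only conditionally — $\sum_i a_i=\sum_i 1/|\rho_i\zeta'(\rho_i)|$ is not absolutely convergent — so $\sum_i$ and $\int_\R$ cannot simply be exchanged. I would instead truncate the $v$-integral to $|v|\le R$ up front (at cost $O(B\varepsilon)$), on that bounded range use the truncated explicit formula $M(x)=\sum_{\gamma_i\le T}(\,\cdot\,)+O\!\big(x\,\mathrm{polylog}(xT)/T\big)$ with $T\to\infty$ coupled to $w$, integrate the resulting finite sum exactly, and control the discrepancy between $\int_{|v|\le R}$ and $\int_\R$ of the trigonometric part by splitting at a frequency cutoff $G$: for $\gamma_i>G$ the smoothness of $k_N$ gives $\int_{|v|>R}\cos(\gamma_i v+\cdot)\,k_N(v)\,dv\ll_R\gamma_i^{-1}$, and $\sum_i a_i/\gamma_i$ converges (using $\gamma_i\asymp i/\log i$), while for the finitely many $\gamma_i\le G$ the smallness of the tail mass $\int_{|v|>R}k_N$ suffices. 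A secondary technical point is verifying the nonnegativity and decay of the kernel $k$.
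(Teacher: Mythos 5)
The paper offers no proof of this statement---it is quoted from Ingham \cite{I}---so your reconstruction has to stand on its own. Its architecture is in fact Ingham's: convolve $q(e^{w+\cdot})$ against a nonnegative kernel whose cosine transform equals $f$ on $[0,\gamma_N]$ and vanishes beyond, trap the smoothed average between $\liminf q$ and $\limsup q$, and pass to all real $y$ by almost periodicity of the finite trigonometric sum. Your kernel analysis is correct (the inverse cosine transform of $f(|\xi|)\mathbf{1}_{|\xi|\le 1}$ is, up to normalization, $\cos^2(\pi v)/(1-4v^2)^2$, the autocorrelation of $\cos(\pi \xi)\mathbf{1}_{[-1/2,1/2]}$, hence nonnegative with $O(v^{-4})$ decay), and the trapping and almost-periodicity steps are sound. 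But there are two genuine gaps. The first is the opening reduction: to prove $\liminf q\le h(y,N)$ you may indeed assume $\liminf q>-\infty$, but you have no right to simultaneously assume $\limsup q<+\infty$, and your only route to RH, simple zeros, and formula (3) goes through two-sided boundedness of $q$. The mixed case must be handled; the standard fix is Landau's oscillation theorem, since already a one-sided bound $M(x)\le C\sqrt{x}$ (or $\ge -C\sqrt{x}$) forces $1/(s\zeta(s))$ to be analytic in $\operatorname{Re}(s)>1/2$ with $|1/\zeta(\sigma+it)|\ll |s|/(\sigma-\tfrac12)$, which yields both RH and simplicity of the zeros. This is not a cosmetic omission, because the subsequent trapping argument for each one-sided inequality genuinely only has a one-sided bound on $q$ available.

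The second and more serious gap is in your repair of the interchange of $\sum_i$ and $\int$, which you correctly identify as the main obstacle but then resolve by asserting that $\sum_i a_i/\gamma_i$ converges. That is not known: it would require a lower bound on $|\zeta'(\rho_i)|$, and even under RH plus the standing hypothesis $M(x)=O(\sqrt{x})$ one only obtains $1/|\zeta'(\rho_i)|\ll\gamma_i$, i.e.\ $a_i\ll 1$, while $\sum_i 1/\gamma_i$ diverges (there are $\sim\frac{T}{2\pi}\log T$ zeros up to height $T$). So the tail bound $\sum_{\gamma_i>G}a_i\cdot O(\gamma_i^{-1})$ is not under control, and the truncated explicit formula you invoke has the same $1/\zeta'$ problem hidden in its error term. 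The way Ingham avoids this is to never expand $q$ into the series over zeros at all: one writes $\int q(e^{w+v})k_N(v)\,dv$ as a contour integral of $e^{(s-1/2)w}\widehat{k_N}(\cdot)/(s\zeta(s))$, justified by the boundedness of $q$ and the bound on $1/\zeta$ above, and the compact support of $\widehat{k_N}$ guarantees that shifting to $\operatorname{Re}(s)=1/2$ picks up only the finitely many residues with $\gamma_i\le\gamma_N$---precisely the terms of $h(w,N)$---so no infinite series ever has to be summed. As written, your treatment of the central analytic step does not go through.
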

   One should note that unlike Theorem \ref{theorem4}, this theorem does not assume the Riemann hypothesis. Additionally this is the main result the analytic algorithm depends on.
   Roughly speaking, a trick to bound $q(x)$ is hence finding a $y$ and $N$ such that $|h(y, N)|$ is large. 
   Moreover since $\sum_i a_i$ diverges and $f(t) > 0$ for $0 < t < 1$, if all $\gamma_i y + \psi_i$ were close to multiples of $2\pi$ then $h(y, N)$ could be an arbitrarily large positive number.
   Similarly if all $\gamma_i y + \psi_i + \pi$ were close to multiples of $2\pi$, then $h(y, N)$ could be an arbitrarily large negative number.
   
   More explicitly, for any sequence of integers $m_i$ where $\cos(\gamma_i y + \psi_i - 2\pi m_i)$ is sufficiently small, $h(y, N)$ can be approximated with
   \begin{align*}
     h(y, N) &\approx 2\sum_{i = 1}^N a_i  \cos(\gamma_i y + \psi_i) \\
                &= 2\sum_{i = 1}^N a_i  \cos(\gamma_i y + \psi_i - 2\pi m_i) \\
                &\approx 2\sum_{i = 1}^N a_i - \sum_{i = 1}^N \bigg(\! \sqrt{a_i} (\gamma_i y + \psi_i - 2\pi m_i) \!\bigg)^2.
   \end{align*}
   This means if $m_i$ were found such that each $\sqrt{a_i} (\gamma_i y + \psi_i - 2\pi m_i)$ is small, $h(y, N)$ should be large.
   This can be achieved \emph{via} lattice reduction. Lattice reduction takes in a basis of integer vectors 
   and returns a new integer basis spanning the same space, where each vector has a small Euclidean norm.
   Fixing $N$, the initial basis is 
   $$
    \begin{pbvec}{cccccc} -\lfloor \sqrt{a_1}\psi_1 2^\nu \rfloor  \\ -\lfloor \sqrt{a_2}\psi_2 2^\nu \rfloor \\ \vdots \\ -\lfloor \sqrt{a_N}\psi_N 2^\nu \rfloor  \\ 2^\nu N^4 \\ 0 \end{pbvec},
    \begin{pbvec}{cccccc} \lfloor \sqrt{a_1}\gamma_1 2^{\nu-10} \rfloor  \\ \lfloor \sqrt{a_2}\gamma_2 2^{\nu-10} \rfloor \\ \vdots \\ \lfloor \sqrt{a_N}\gamma_N 2^{\nu-10} \rfloor  \\ 0 \\ 1 \end{pbvec},
    \begin{pbvec}{cccccc} \lfloor 2\pi\sqrt{a_1}2^\nu \rfloor  \\ 0 \\ \vdots \\ 0  \\ 0 \\ 0 \end{pbvec},
    $$
    $$
    \begin{pbvec}{cccccc} 0 \\ \lfloor 2\pi\sqrt{a_2}2^\nu \rfloor  \\ \vdots \\ 0  \\ 0 \\ 0 \end{pbvec},
    \cdots,
    \begin{pbvec}{cccccc} 0 \\ 0  \\ \vdots \\ \lfloor 2\pi\sqrt{a_N}2^\nu \rfloor \\ 0 \\ 0 \end{pbvec}
   $$
   where $\nu$ is any integer satisfying $2N \leq \nu \leq 4N$.
   
   Since $2^\nu N^4$ is much larger than every other element and no other vector has a nonzero $(N+1)$st component, 
   there should be exactly one reduced vector with a nonzero $(N+1)$st term and it will equal $\pm 2^\nu N^4$.
   Call this vector $v = (v_1, v_2, \ldots, v_{N+2})^\intercal$ and without loss of generality assume $v_{N+1} = 2^\nu N^4$.

   For each $1 \leq i \leq N$, this vector has components
   $$ v_i = z\lfloor \sqrt{a_i}\gamma_i 2^{\nu-10} \rfloor - \lfloor \sqrt{a_i}\psi_i 2^\nu \rfloor - m_i \lfloor 2\pi\sqrt{a_i}2^\nu \rfloor $$
   for some integers $z, m_1, m_2, \ldots, m_N$. 
   Now because $v_{N+1}$ is so large these terms should be small, which means
   $$ \sqrt{a_i}(\gamma_i z/2^{10} - \psi_i - 2\pi m_i) $$
   will also be small.
   Hence setting $y = z/2^{10}$ should give a value where $h(y, N)$ is large and positive,
   where the value $z$ is known, as $z = v_{N+2}$. 
   
   To find a $y$ that makes $h(y, N)$ large and negative, simply replace $\psi_i$ with $\psi_i + \pi$ in the call to the lattice reduction algorithm.
   
   Finally to improve results, the zeros $\rho_i$ can by sorted by $a_i$, rather than sorted by $\gamma_i$ as was done above.
   This will ensure the largest $a_i$'s will have their corresponding cosines near $\pm 1$, making the sum even larger.

   \section{Implementation Details}
   \label{implementation}
   
   \subsection{Sieve}
   
   When performing the sieve in section \ref{sieve}, the bottleneck is accounting for multiples of small prime powers, i.e. $2$, $3$, $2^2$, etc.
   To circumvent this, these values can be pre-sieved. This implementation pre-sieved with multiples of $2$, $3$, $2^2$, $5$, $7$, $3^2$, and $11$.
   To do this the sieve was applied, only using these numbers, on an array of length $2\cdot2\cdot3\cdot3\cdot5\cdot7\cdot11 = 13860$.
   When the main sieve was called, the array $m$ was assembled by joining many copies of this precomputed array.
   
   Because computing all $10^{16}$ values of $M$ at once would have required storing an array too large for RAM, the segmented version of the sieve was used.
   Computations were done in blocks of length $8\,728\,473\,600$, and used roughly 46 GB of RAM.
   During the main loop of the sieve, each block was further divided into smaller blocks to allow $m$ to fit in the L3 cache. 
   However, once the size of the primes became substantially larger than the length of $m$, too much time was spent iterating over primes that were never used.
   To address this, the length of $m$ was increased and no longer fit in the L3 cache.
   After each block was computed, each value of $M(n)$ was recorded if it was an extremum, zero, or if $n$ was a multiple of $10^8$.
   
   Finally, when identifying elements that correspond to a multiple of $p$ or $p^2$ in the sieve, integer division is required and is very costly.
   A way around this is to use methods described in \cite{GM}, which turns integer division into one 128 bit multiplication, one addition, and two bit shifts.
   This requires precomputing two constants for each denominator used in the scheme.
   
   \subsection{Combinatorial}
   
   To compute $M(x)$, the value $u = \lceil 0.5 x^{2/3} \rceil$ was chosen since it gave the fastest results.
   This means that when computing $M(2^{73})$, each $M(n)$ for all $n \leq 3.5\cdot10^{14}$ were computed through a segmented sieve.
   During this sieving process a block size of roughly $96\sqrt{2u}$ was chosen, giving a total of about $0.0073\sqrt{u}$ blocks to sieve through.
   Once a block of $\mu$ and $M$ values were computed, they were accounted for in each $S(x/n, u)$. Therefore all $S(x/n, u)$ were computed once the sieve finished.
   
   Computing all $S$ as stated in section \ref{combinatorial} requires $O(x^{2/3})$ integer divisions, and this is extremely costly.
   Fortunately when computing a value of $S$, both sequences of quotients that appear have the same numerator and each denominator successively increments by 1.
   This means all successive quotients $y/n$ with $\sqrt[3]{2y} \leq n \leq \sqrt{y}$ can be computed using a Bresenham style method. 
   This scheme computes a quotient based off the value of the previous quotient, and is described in detail in \cite{RS}.
   For all denominators $n < \sqrt[3]{2y}$, the same technique used in the sieve to turn a quotient into a multiplication, addition, and bit shifts can be employed \cite{GM}.
   The precomputing of constants for this method requires exactly one quotient to be computed per denominator.
   This reduces the number of integer divisions from $O(x^{2/3})$ to $O(x/u) = O(x^{1/3})$.
   
   \subsection{Analytic}
   
   Computing bounds on $q(x)$ requires many digits of $\rho_i$ and $\zeta'(\rho_i)$ and a fast lattice reduction routine.
   Mathematica was used to compute $\rho_i$ to $10\,000$ digits of precision for all $i \leq 14\,400$ and subsequently compute each $\zeta'(\rho_i)$ to roughly $8151$ digits of precision.
   The results were verified using PARI/GP.
   
   The lattice reduction library chosen was fplll \cite{FPLLL}.
   Its implementation has a runtime complexity of $O(N^{4+\varepsilon} \nu (N + \nu))$, which is faster than the original algorithm's runtime complexity of $O(N^{6+\varepsilon} \nu^3)$ \cite{MSV}.
   For each call to fplll, the optional parameter values $(\delta, \eta) = (0.9999, 0.99985)$ were used.
   The choices of these parameters were intended to speed up the runtime, with the trade off of a less optimal solution.
   
   \subsection{Hardware}

   The computations of $\rho_i$ and $\zeta'(\rho_i)$ were performed on a $360$ core cluster on the Wrangler system at the Texas Advanced Computing Center.
   All other computations were run on a 2.7 GHz 12-core Intel Xeon E5 processor with a 32 MB L3 cache and 64 GB of RAM.
   The code was compiled with g\texttt{++} and where possible, routines were parallelized using OpenMP.

   \section{Results}
   \label{results}
   \subsection{Sieve}
   
   Computing $M(n)$ for all $n \leq 10^{16}$ took roughly $7.5$ months and was heavily influenced by cache misses.
   The frequency of these misses increased with $n$.
   For comparison, the first $10^{14}$ values took $1$ day to compute, the next $10^{14}$ values took $1.35$ days, 
   and this gradually increased until the final $10^{14}$ values took $2.8$ days. 
   Results were periodically verified throughout the computation using the algorithm described in \cite{DR} to compute $M(n)$ and compare values.
   No discrepancies were found.
   
   \begin{center}
   \includegraphics[scale = .33]{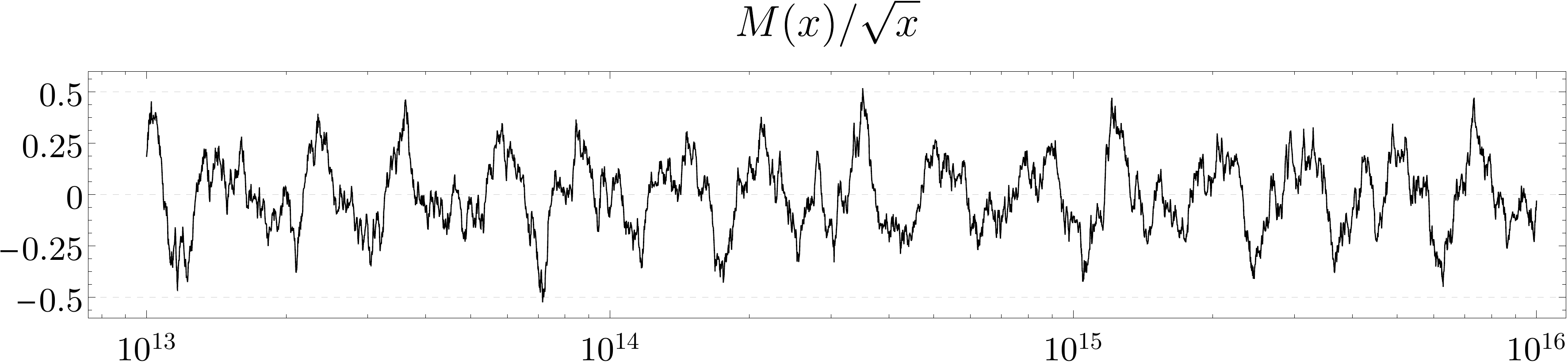}
   \end{center}
   
   The largest absolute values $M(n)$ attains for $n \leq 10^{16}$ are 
   $-35\,629\,003$ and $40\,371\,499$,
   and the largest absolute values $q(n)$ attains in this interval are
   $-0.525$ and $0.571$.
   Below is a select list of extrema corresponding to prominent peaks of $M$:
   
   {\fontsize{7}{8}\selectfont
   \begin{center}
   \begin{tabular}{|rrr||rrr|}
   \hline
 $n\quad\quad\;\;\;$ & $M(n)$ & $q(n)$ & $n\quad\quad\quad\quad$ & $M(n)\;\;$ & $q(n)$ \\\hline
 6631245058 & -31206 & -0.383 & 5197159385733 & -689688 & -0.303 \\
 \bf7766842813 & 50286 & \bf0.571 & 10236053505745 & 1451233 & 0.454 \\
 15578669387 & -51116 & -0.410 & 21035055623987 & -1740201 & -0.379 \\
 19890188718 & 60442 & 0.429 & 21036453134939 & -1745524 & -0.381 \\
 22867694771 & -62880 & -0.416 & 23431878209318 & 1903157 & 0.393 \\
 38066335279 & -81220 & -0.416 & 30501639884098 & -1930205 & -0.349 \\\hline
 48638777062 & 76946 & 0.349 & 36161703948239 & 2727852 & 0.454 \\
 56808201767 & -87995 & -0.369 & 36213976311781 & 2783777 & 0.463 \\
 101246135617 & -129332 & -0.406 & \bf71578936427177 & -4440015 & \bf-0.525 \\
 108924543546 & 170358 & 0.516 & 146734769129449 & 3733097 & 0.308 \\
 148491117087 & -131461 & -0.341 & 175688234263439 & -5684793 & -0.429 \\
 217309283735 & -190936 & -0.410 & 212132789199869 & 5491769 & 0.377 \\\hline
 297193839495 & 207478 & 0.381 & 212137538048059 & 5505045 & 0.378 \\
 330508686218 & -294816 & -0.513 & 304648719069787 & -5757490 & -0.330 \\
 402027514338 & 271498 & 0.428 & 351246529829131 & 9699950 & 0.518 \\
 661066575037 & 331302 & 0.407 & 1050365365851491 & -13728339 & -0.424 \\
 1440355022306 & -368527 & -0.307 & 1211876202620741 & 16390637 & 0.471 \\
 1653435193541 & 546666 & 0.425 & 2458719908828794 & -20362905 & -0.411 \\\hline
 2087416003490 & -625681 & -0.433 & 3295555617962269 & 18781262 & 0.327 \\
 2343412610499 & 594442 & 0.388 & 3664310064219561 & -23089949 & -0.381 \\
 3270926424607 & -635558 & -0.351 & 4892214197703689 & 24133331 & 0.345 \\
 4098484181477 & 780932 & 0.386 & \bf6287915599821430 & \bf-35629003 & -0.449 \\
 5191164528277 & -668864 & -0.294 & \bf7332940231978758 & \bf40371499 & 0.471 \\\hline
   \end{tabular}
   \end{center}
   }
   
   All zeros of $M(n)$ for $n \leq 10^{16}$ were recorded.
   A natural question to ask is for any $x$, how many zeros are less than $x$?
   Defining $V(x)$ to be the number of zeros less than $x$, a theorem of Landau states $V(x) = \Omega(\log x)$ \cite{EG}.
   This however is expected to be a weak lower bound.
    
    Treating $M(n)$ as a random walk with probability of staying stationary $1-6/\pi^2$ and with both probabilities of moving up and down $3/\pi^2$, it would follow that $V(x) = \sqrt{\pi x/3} + o(\sqrt{x}\,)$.
    In practice however $M$ cannot be modeled as a random walk because there is regularity, e.g. $M(4n+3) = M(4n+4)$, etc.
    Nonetheless, the data suggest $V(x) = \Theta(x^{1/2 + \varepsilon})$.
    In fact $3.5 \sqrt{x}$ or even $\sqrt{x} \log \log x$ seem like good approximations.
    
   \begin{center}
   \begin{tabular}{cc}
   {\small
    \adjustbox{valign=t}{\begin{tabular}{l}\\
    \begin{tabular}{| ll || ll |} 
     \hline
     $n\;\,$ & $\!\!\!\!V(10^n)$ & $n\phantom{1}\;\,$ & $V(10^n)$ \\
     \hline
     1 & \!\!\!\!1 & 9 & \!\!\!\!141121 \\
     2 & \!\!\!\!6 & 10 & \!\!\!\!431822 \\
     3 & \!\!\!\!92 & 11 & \!\!\!\!1628048 \\
     4 & \!\!\!\!406 & 12 & \!\!\!\!4657633 \\
     5 & \!\!\!\!1549 & 13 & \!\!\!\!12917328 \\
     6 & \!\!\!\!5361 & 14 & \!\!\!\!40604969 \\
     7 & \!\!\!\!12546 & 15 & \!\!\!\!109205859 \\
     8 & \!\!\!\!41908 & 16 & \!\!\!\!366567325 \\
     \hline
    \end{tabular}
    \end{tabular}}
    } & \adjustbox{valign=t}{\includegraphics[scale = .4]{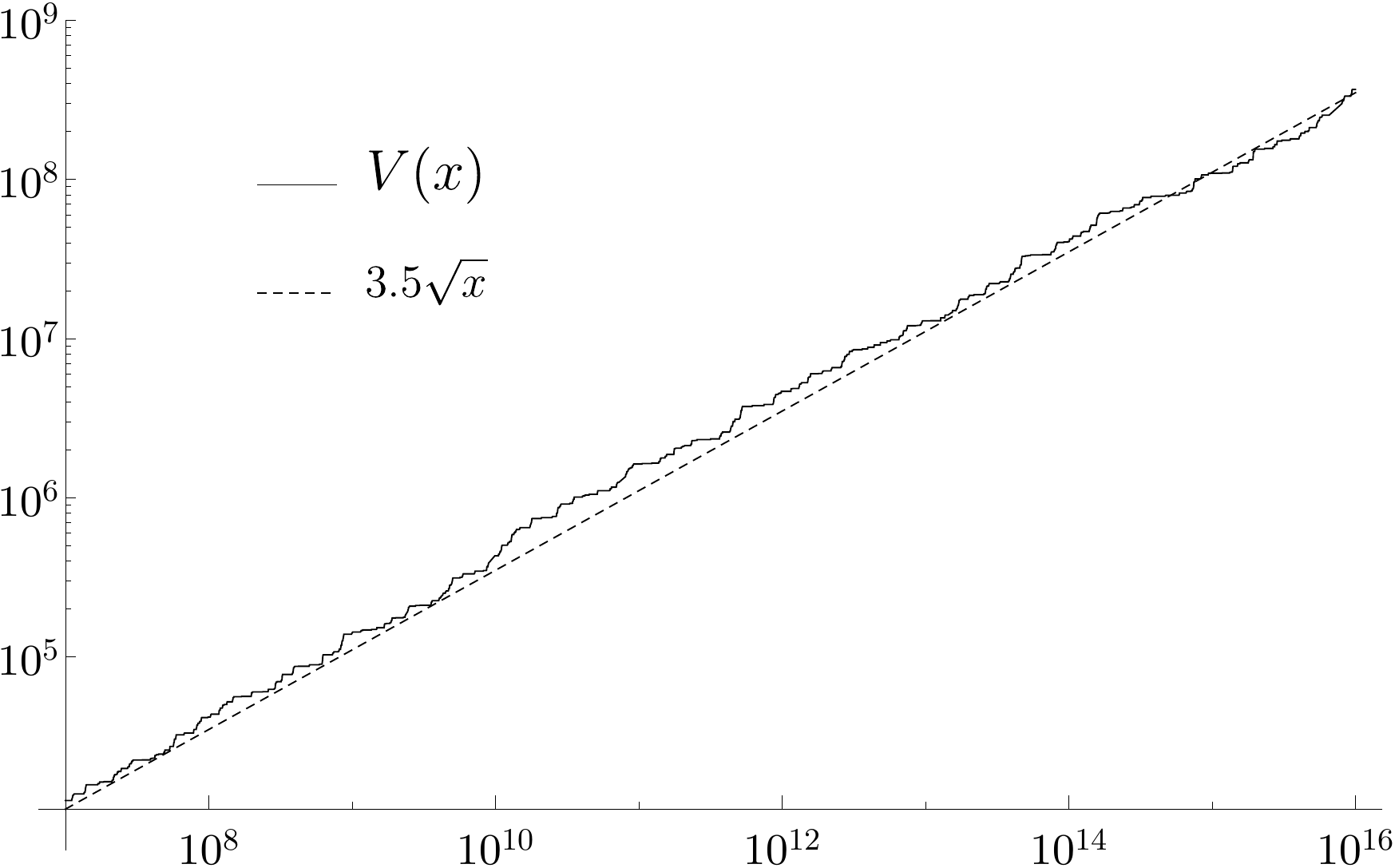}}
   \end{tabular}
   \end{center}
   
   A property these zeros can help investigate is whether $M$ tends to have a bias towards being positive or negative.
   Define $M_{\texttt{+}}(x)$ to be the percentage of $M(n)$ that are positive for $n \leq x$, that is
   $$ M_{\texttt{+}}(x) = \frac{1}{x} \sum_{n \leq x \atop M(n) > 0} 1. $$
   A direct consequence of work by Ng \cite{NG} is that under certain conjectures the average value of $M_{\texttt{+}}(x)$ should be $1/2$, i.e. no bias should exist.
   Computing $\mu$ at each zero of $M$, the sign of $M$ can be determined between consecutive zeros which can be used to compute $M_{\texttt{+}}$.
   For $x \leq 10^5$ there is a clear negative bias, but for $10^5 \leq x \leq 10^{16}$ there is no longer any apparent bias.
   For $10^5 \leq x \leq 10^{16}$ the extreme values are $M_{\texttt{+}}(53\,961\,131\,760\,658) \approx \nolinebreak0.385$ and $M_{\texttt{+}}(238\,469\,701\,201\,412) \approx 0.601$.\vspace{5pt}
   
   \begin{center}
   \includegraphics[scale = .46]{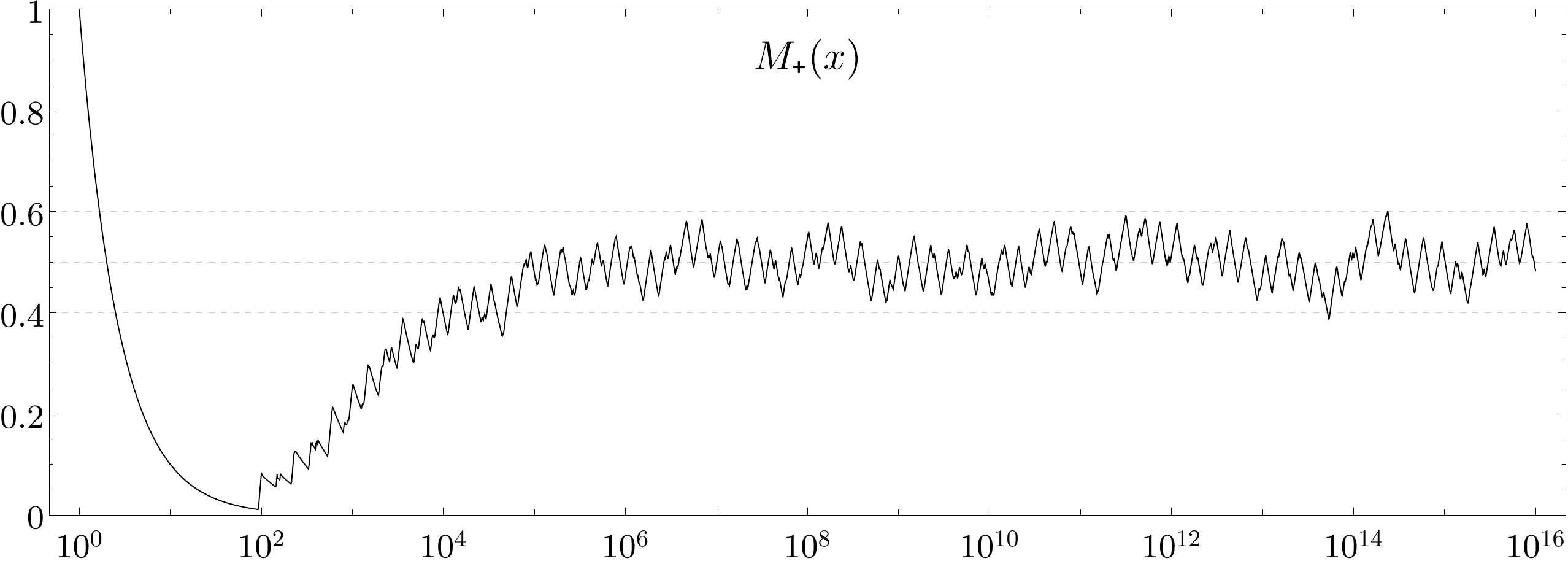}
   \end{center}
   
   Another characteristic of the zeros worth consideration is the gap between two consecutive zeros.
   To examine these gaps, let $G_m(g)$ be the number of gaps of length $g$ that occur for the first $m$ zeros.
   For a fixed value of $m$, this function can be plotted to show how the number of gaps of certain lengths vary.
   Letting $\omega = V(10^{16}) = 366\,567\,325$, gives the following plot:
   
   \begin{center}
   \includegraphics[scale = .57]{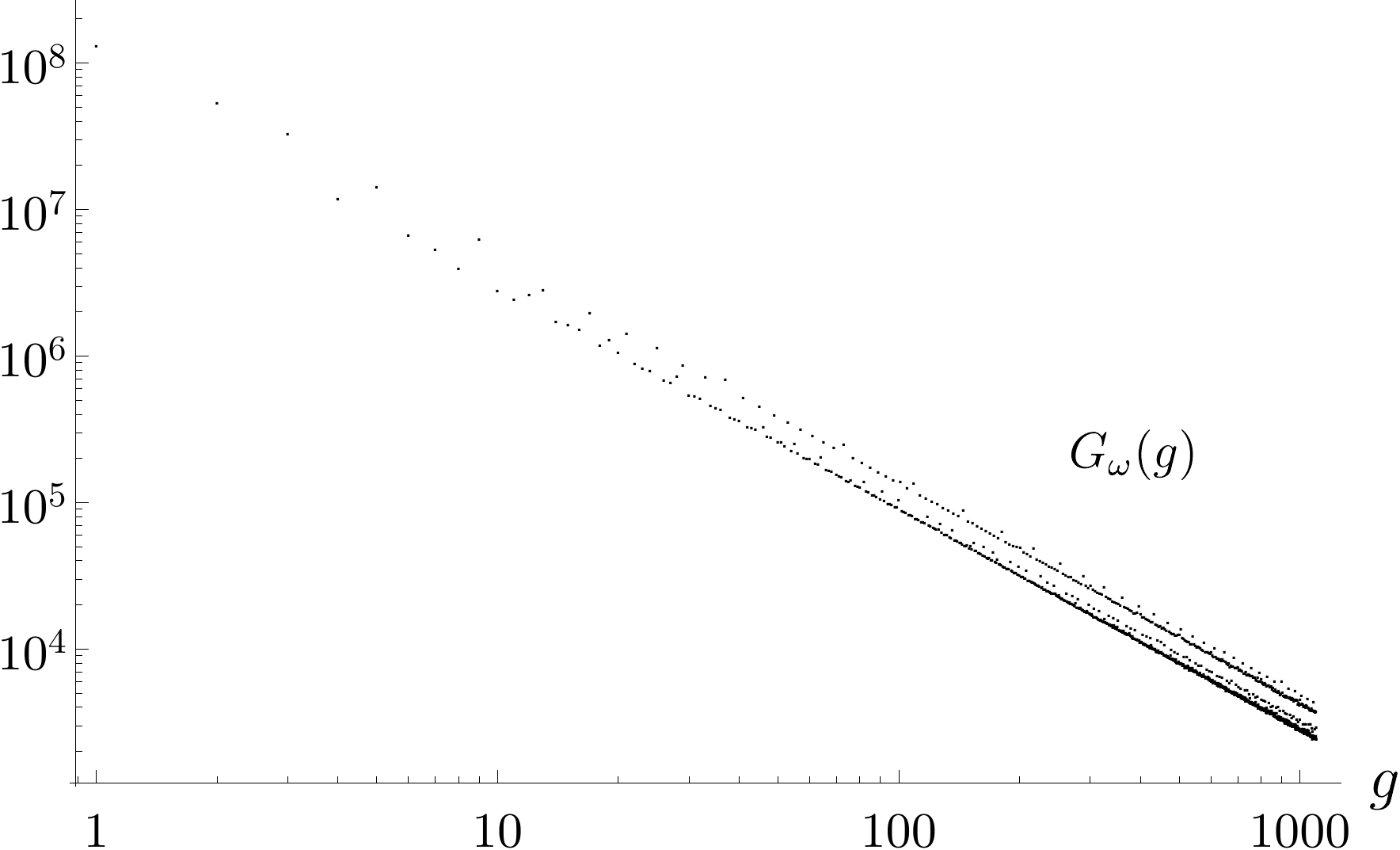}
   \end{center}
   
   As seen above, there are distinct bands present and each looks to roughly follow a power law, all with the same exponent.
   Zooming in, it appears each band is represented by all $g$ congruent to $1$ modulo a product of distinct primes squared.
   
   \begin{center}
   \includegraphics[scale = .75]{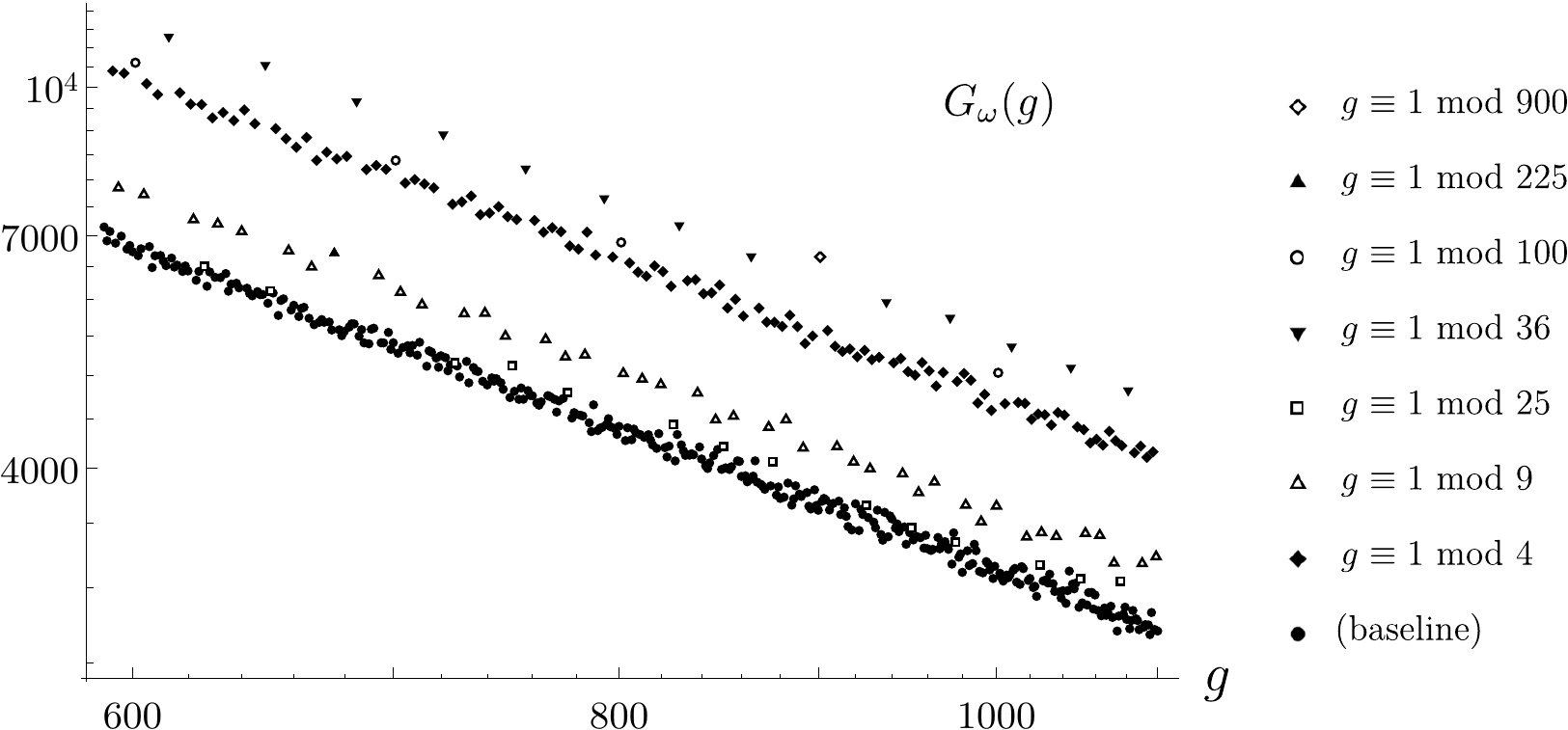}
   \end{center}
   Defining $b_m(g)$ to be the baseline band (which can be approximated by a power law) and $P_g$ to be the set of all primes $p$ where $g \equiv 1 \!\!\mod{p^2}$, it seems these bands are expressed with the multiplier
   $$ G_m(g) = \left( \sum_{S \in {\mathcal P}(P_g)} \prod_{p \in S} \frac{1}{p^2-2} \right) b_m(g). $$
   For example if $g_0 \equiv 1 \!\!\mod{4}$ and $g_0 \not\equiv 1 \!\!\mod{p^2}$ for $p \neq 2$, then $P_{g_0} = \{2\}$ and $G_m(g_0)$ should be above $b_m(g_0)$ by a multiplicative factor of $3/2$.
   
   \subsection{Combinatorial}

   Calculating $M(x)$ at powers of two scaled roughly as $O(x^{2/3})$, i.e. $M(2^{x+1})$ was about $2^{2/3} \approx 1.59$ times slower to compute than $M(2^x)$. 
   However, as in the sieve above, cache misses became more frequent for larger $x$ resulting in scale factors around $1.63$.
   The results are as follows:\vspace{5pt}
   
   {\fontsize{7}{8}\selectfont
   \begin{center}
   \begin{tabular}{|rr||rr||rr||rrr|}
   \hline
 $n$ & $M(2^{n^{\vphantom{n}}})$ & $n\,$ & $M(2^n)$ & $n\,$ & $M(2^n)$ & $n\,$ & $M(2^n)$ & time\,{(s)} \\\hline
 0 & $1$ & 19 & $\texttt{-}125$ & 38 & $38729$ & 57 & $51885062$ & $236.02$ \\
 1 & $0$ & 20 & $257$ & 39 & $\texttt{-}135944$ & 58 & $\texttt{-}15415164$ & $374.60$ \\
 2 & $\texttt{-}1$ & 21 & $\texttt{-}362$ & 40 & $101597$ & 59 & $\texttt{-}89014828$ & $594.65$ \\
 3 & $\texttt{-}2$ & 22 & $228$ & 41 & $15295$ & 60 & $\texttt{-}48425659$ & $943.63$ \\
 4 & $\texttt{-}1$ & 23 & $\texttt{-}10$ & 42 & $\texttt{-}169338$ & 61 & $220660381$ & $1494.41$ \\
 5 & $\texttt{-}4$ & 24 & $211$ & 43 & $259886$ & 62 & $\texttt{-}248107163$ & $2378.21$ \\\hline
 6 & $\texttt{-}1$ & 25 & $\texttt{-}1042$ & 44 & $\texttt{-}474483$ & 63 & $580197744$ & $3815.14$ \\
 7 & $\texttt{-}2$ & 26 & $329$ & 45 & $1726370$ & 64 & $\texttt{-}851764249$ & $6263.46$ \\
 8 & $\texttt{-}1$ & 27 & $330$ & 46 & $\texttt{-}3554573$ & 65 & $809210153$ & $10376.5$ \\
 9 & $\texttt{-}4$ & 28 & $\texttt{-}1703$ & 47 & $\texttt{-}135443$ & 66 & $\texttt{-}1220538763$ & $17235.2$ \\
 10 & $\texttt{-}4$ & 29 & $6222$ & 48 & $3282200$ & 67 & $\texttt{-}925696220$ & $28404.4$ \\
 11 & $7$ & 30 & $\texttt{-}10374$ & 49 & $1958235$ & 68 & $2092394726$ & $46429.7$ \\\hline
 12 & $\texttt{-}19$ & 31 & $9569$ & 50 & $\texttt{-}1735147$ & 69 & $\texttt{-}3748189801$ & $75680.8$ \\
 13 & $22$ & 32 & $1814$ & 51 & $6657834$ & 70 & $9853266869$ & $123189$ \\
 14 & $\texttt{-}32$ & 33 & $\texttt{-}10339$ & 52 & $\texttt{-}13927672$ & 71 & $\texttt{-}12658250658$ & $200574$ \\
 15 & $26$ & 34 & $\texttt{-}3421$ & 53 & $\texttt{-}11901414$ & 72 & $9558471405$ & $326068$ \\
 16 & $14$ & 35 & $8435$ & 54 & $48662015$ & 73 & $\texttt{-}6524408924$ & $529127$ \\
 17 & $\texttt{-}20$ & 36 & $38176$ & 55 & $\texttt{-}48361472$ & & & \\
 18 & $24$ & 37 & $\texttt{-}28118$ & 56 & $23952154$ &  &  & \\
 \hline
   \end{tabular}
   \end{center}
   }
   
   
   The correctness of the implementation was verified in 3 ways:
   \begin{itemize}
     \item Tests on many already known values were run.
     \item When computing $M(x)$, $M(x/128)$ was simultaneously computed.
     \item Formula (\ref{3}) was used to estimate the first couple digits of $M(x)$ and its order of magnitude.
   \end{itemize}
   \noindent All values were found to agree.
   
   \subsection{Analytic} The results of deriving bounds on $q(x)$ can be summarized with the following theorem.
   \begin{theorem}
   The function $q(x) = M(x)/\sqrt{x}$ has bounds 
   $$\liminf q(x) < -1.837625$$ and $$\limsup q(x) > 1.826054.$$
   \end{theorem}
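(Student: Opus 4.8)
The plan is to invoke Ingham's theorem directly. Since $\liminf q(x) \le h(y,N) \le \limsup q(x)$ for every real $y$ and every positive integer $N$, where $h(y,N) = 2\sum_{i=1}^N a_i f(\gamma_i/\gamma_N)\cos(\gamma_i y + \psi_i)$, it suffices to exhibit one pair $(y_{+},N)$ with $h(y_{+},N) > 1.826054$ and one pair $(y_{-},N)$ with $h(y_{-},N) < -1.837625$, and then to verify these two numerical inequalities rigorously.

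First I would fix a value of $N$ (the implementation uses $N = 14\,400$) and assemble the data $a_i = 1/|\rho_i\zeta'(\rho_i)|$, $\gamma_i = \text{Im}(\rho_i)$, $\psi_i = \arg(\rho_i\zeta'(\rho_i))$ for $1 \le i \le N$ to high precision, sorting the zeros by $a_i$ as suggested at the end of Section~\ref{analytic} so that the dominant terms can be driven closest to $\pm 1$. Next I would carry out the lattice reduction of Section~\ref{analytic}: build the $(N+2)$-dimensional integer basis with a scaling parameter $\nu$ in the allowed range, reduce it with fplll, locate the unique reduced vector $v$ whose $(N+1)$st coordinate is $\pm 2^\nu N^4$, and read off $z = v_{N+2}$, setting $y_{+} = z/2^{10}$. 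Repeating the construction with each $\psi_i$ replaced by $\psi_i + \pi$ yields $y_{-}$. This step is heuristic — nothing guarantees that the reduced basis produces a $y$ for which $h(y,N)$ is as large as one wants — so in practice one tries several choices of $N$ and $\nu$ and keeps the best outcomes.

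Once the candidate abscissae $y_{\pm}$ are in hand, the remainder is a finite, deterministic verification: evaluate $h(y_{\pm},N)$ and certify both the sign and the magnitude of the result. I would do this in interval (ball) arithmetic, carrying enough guard digits that the accumulated rounding error over all $N$ summands, together with the uncertainty in the inputs $\gamma_i$ and $\psi_i$, stays comfortably below the gap between $h(y_{\pm},N)$ and the target thresholds $-1.837625$ and $1.826054$. Note that $h(y,N)$ is a genuine finite sum, so no tail estimate enters; moreover $f(1) = 0$, so the construction is insensitive to the exact cutoff at $i = N$.

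The main obstacle is the certified numerics behind the inputs: one must know the ordinates $\gamma_i$ and, more delicately, the derivative values $\zeta'(\rho_i)$ — hence $a_i$ and $\psi_i$ — to guaranteed precision for all $i$ up to $N$. Locating and separating the first $N$ zeros of $\zeta$ on the critical line and bounding $\zeta'$ there to the required number of digits, in a way independent enough to be cross-checked by a second system (as the paper does with PARI/GP), is where the real care is needed; the lattice step and the final summation are comparatively routine once that data is trustworthy.
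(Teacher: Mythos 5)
Your proposal follows essentially the same route as the paper: invoke Ingham's theorem, use the lattice reduction of Section~\ref{analytic} to produce candidate values $y_{\pm}$, and then numerically certify $h(y_{-},14400) < -1.837625$ and $h(y_{+},14400) > 1.826054$. The only minor difference is that the paper runs the reduction with $N=800$ zeros but evaluates $h$ with $N=14400$ (which Ingham's theorem permits, since the lattice step is only a heuristic for choosing $y$), and your added emphasis on interval arithmetic and certified input data is a reasonable strengthening of the verification the paper performs.
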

   \begin{proof}
   To derive these bounds, the lattice reduction algorithm covered in section \ref{analytic} was run with inputs $\nu = 17\,000$ and $N = 800$.
   Both calls took roughly $35$ days to finish, giving $y$ values
   $$ y_{\texttt{-}} \approx 1.50546\cdot10^{5096} \quad \text{ and } \quad y_{\texttt{+}} \approx -2.58842\cdot10^{5097}, $$
   where their exact values can be found in the appendix below. Evaluating $h(y_\pm, 14400)$ gives the extreme values
   $$ h(y_{\texttt{-}}, 14400) \approx -1.837625 \quad \text{ and } \quad h(y_{\texttt{+}}, 14400) \approx 1.826054. $$
   \end{proof}
   In addition, the lattice reduction algorithm was run on various choices of smaller $\nu$ and $N$. These establish some weaker bounds:
   \begin{center}
   \begin{tabular}{ | c | c | r | c | c | }
   \hline
   $\nu$ & $N$ & $y\quad\quad\quad\;\;$ & $h(y, 14400)\;$ & time\,{(d)}\\
   \hline
   $5000$ & $400$ & $-2.78367\cdot10^{1493^{\vphantom{3}}}$ & $\phantom{-}1.61230$ & $0.53$\\
   $12000$ & $600$ & $-5.19605\cdot10^{3594}$ & $-1.76011$ & $7.32$\\
   $12000$ & $600$ & $9.31709\cdot10^{3594}$ & $\phantom{-}1.76382$ & $7.33$\\
   $15000$ & $700$ & $2.74696\cdot10^{4495}$ & $-1.81111$ & $19.00$\\
   $15000$ & $700$ & $9.69908\cdot10^{4495}$ & $\phantom{-}1.81252$ & $18.99$\\
   $17000$ & $800$ & $1.50546\cdot10^{5096}$ & $-1.83762$ & 35.07\\
   $17000$ & $800$ & $-2.58842\cdot10^{5097}$ & $\phantom{-}1.82605$ & 35.09\\
   \hline
   \end{tabular}
   \end{center}
   
   Finally, an approximate formula can be used to visualize what $q(x)$ might look like in the neighborhood of $y_\pm$.
   Defining $$ \tilde{q}(x) = 2\sum_{i=1}^{14400} a_i \cos(\gamma_i x + \psi_i) $$
   and assuming $\tilde{q}(x) \approx q(e^x)$ gives plots about these extreme values:
   
   \begin{center}
   \includegraphics[scale = .65]{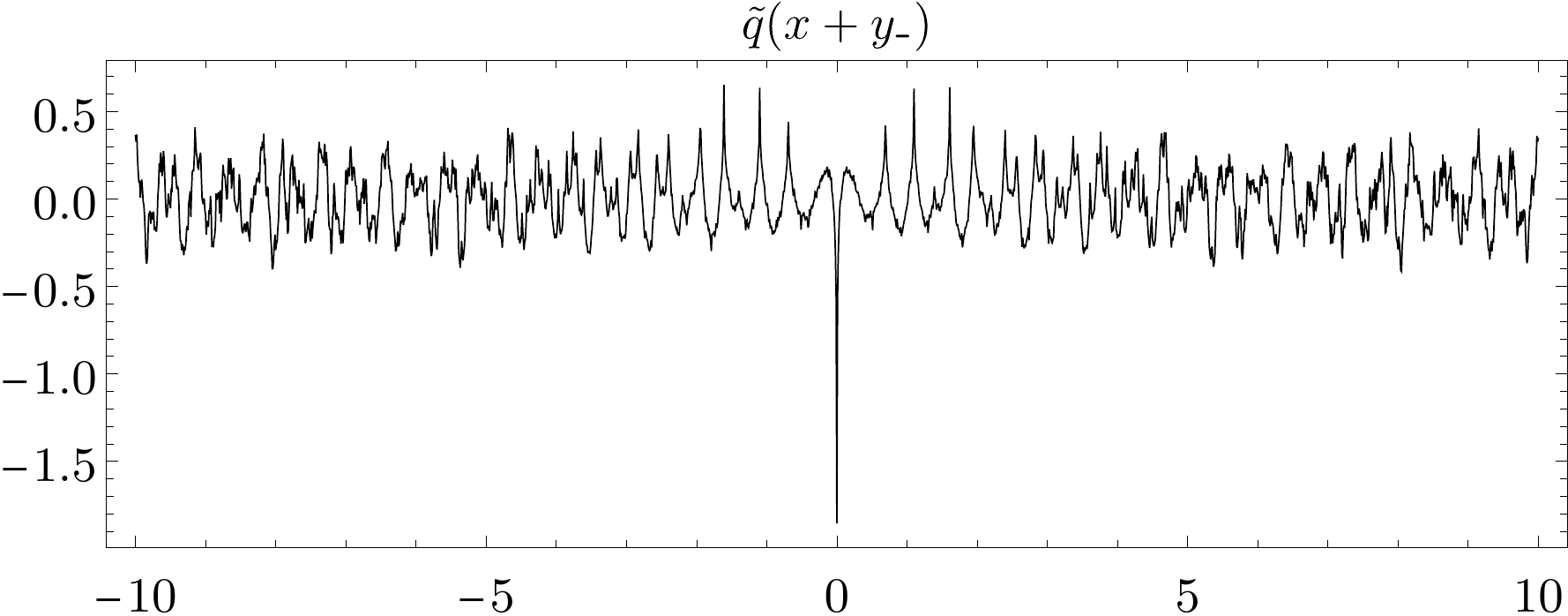}
   \end{center}
   \begin{center}
   \includegraphics[scale = .65]{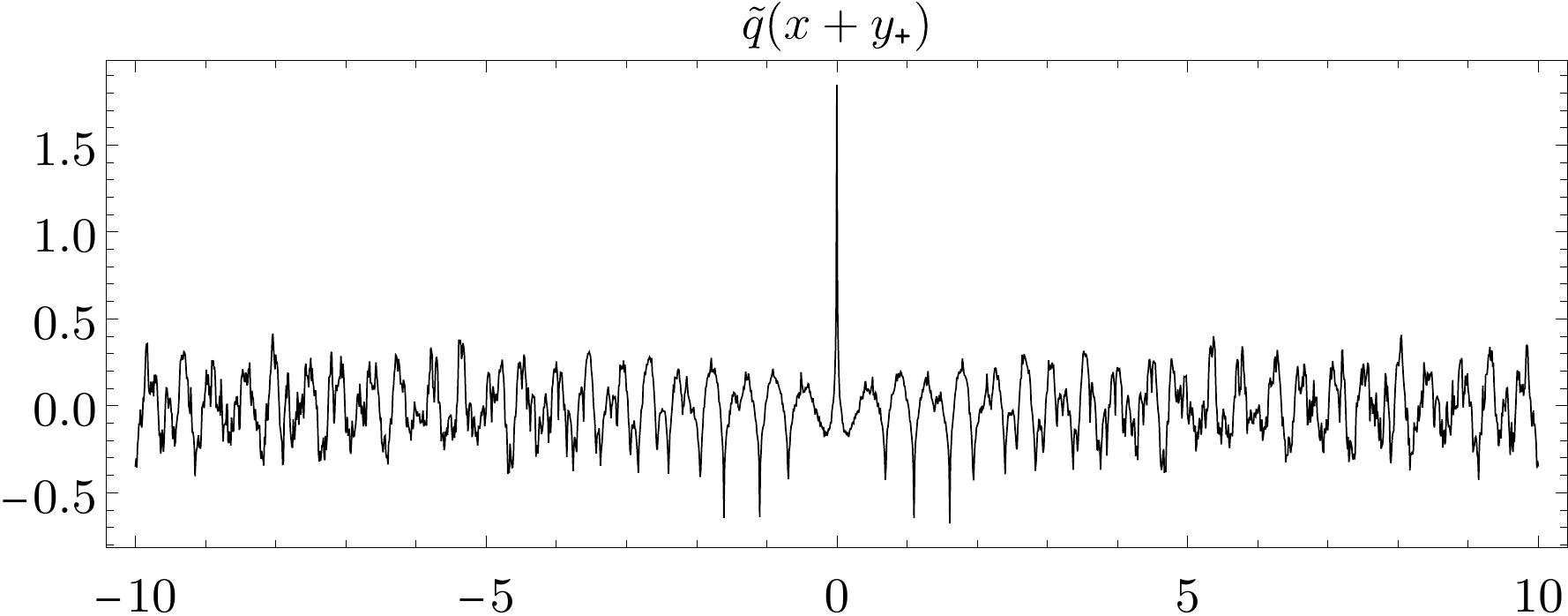}
   \end{center}
   The observation made in \cite{KJ} that the width of the peaks of $q(x)$ remain constant with respect to $\log x$ seems to hold this far out.
   Moreover as seen in the above figures, these peaks appear to be anomalies, as most peaks in the vicinity of $y_\pm$ do not exceed $0.5$ in absolute value.
   
   \section{Extensions and Concluding Remarks}
   
   \subsection{Sieve}
   Computing $M(n)$ for all $n \leq 10^{16}$ took about $7.5$ months and the time was dominated by cache misses.
   To systematically compute $M(n)$ for say $n \leq 2 \cdot 10^{16}$, the cache misses beyond $10^{16}$ would grow substantially more frequent, causing a drastic slow down.
   To reduce the number of these misses, additional measures can be taken.

   First, rather than storing each value $\mu(n)$ in $1$ byte, $4$ values of $\mu(n)$ can be encoded together since $\mu(n)$ only takes on $3$ possible values, allowing it to be expressed with $2$ bits.
   A similar approach can be taken for $M(n)$ too, but not for $n > 10^{16}$. For computations on shorter intervals though, space can still be saved.
   For example, $M(n)$ can be stored as a signed 16 bit integer as long as $|M(n)| < 2^{15}$. The first time this inequality is violated is at $n = \nolinebreak7\,613\,644\,886$.
   Similarly, $M(n)$ can be stored as a signed 24 bit integer for all $n < \nolinebreak348\,330\,855\,359\,510$.
   
   A more robust solution to prevent cache misses is to employ an additional data structure.
   Recall that during the sieve the array built to store values of $\mu$ is segmented into blocks small enough to fit into the L3 cache.
   However once the primes being iterated over become too large, much time is wasted iterating over primes that aren't used.
   Currently, this is mitigated by using larger blocks, but these larger blocks no longer fit in the L3 cache. 
   Instead, this problem could be resolved in the following way:
   \begin{algorithm}
     \SetArgSty{}
     \DontPrintSemicolon
     Create a hashmap $h$ that maps an integer to a vector of integers\;
     \For{each prime $p \leq \sqrt{x}$}{
       Find the first block $i$ with an index corresponding to a multiple of $p$\;
       If $h(i)$ is uninitialized, set $h(i) \leftarrow \{p\}$\;
       Otherwise append $p$ to the vector $h(i)$\;
     }
     \For{each block $i$}{
       \For{each $p$ in $h(i)$}{
         Sieve block $i$ with $p$ as normal\;
         Determine the next block $j$ in which $p$ will be used\;
         If $h(j)$ is uninitialized, set $h(j) \leftarrow \{p\}$\;
         Otherwise append $p$ to the vector $h(j)$\;
       }
       Clear $h(i)$\;
     }
   \end{algorithm}
   
   Under this approach, the block size can be set to always fit in the L3 cache without having the overhead of iterating over primes that will never be used.
   Notice here that each prime $p$ will only be present in $h$ at most once.
   Hence the size of $h$ is only dependent on the number primes used, not the number of blocks being iterated over.
   For an L3 cache similar in size to the one used, this method could help make it feasible to compute beyond $10^{16}$.
         
   \subsection{Combinatorial}
   Isolated values of $M(x)$ were computed at powers of $2$ up to $M(2^{73}) = -6524408924$, which took roughly $6$ days to calculate.
   At the time this paper was written, to the author's knowledge, there are no known combinatorial identities that lead to a runtime complexity less than $O(x^{2/3+\varepsilon})$.
   However a speedup could still potentially be obtained with a combinatorial approach.
   
   Recall the identity used in the algorithm and stated in Theorem \ref{theorem3} is
   $$ M(x) = \sum_{n \leq x/u} \mu(n) S(x/n, u). $$
   Since $\mu(n)$ will asymptotically be zero $1 - 6/\pi^2 \approx 39\%$ of the time,
   one approach could be to look for a sum whose summand is zero more often than this.
   The closest identity the author found in literature is due to Benito and Varona \cite{BV} and is
   $$ M(x) = \frac{1}{2}\sum_{n \leq x/u} f^{-1}(n) G(x/n, u), $$
   where
   \begin{align*}
     G(y, u) = -&3 + \sum_{y/u < n \leq \kappa_y} (h(n)-h(n-1)) M(y/n) + h(\nu_y) M(\kappa_y) \\
     & + \sum_{n \leq \nu_y} \bigg( 3\bigg\lfloor \frac{n}{3k} \bigg\rfloor - 2\bigg\lfloor \frac{n-k}{2k} \bigg\rfloor \bigg) \mu(n),
   \end{align*}
   and $f^{-1}(n)$ is the Dirichlet inverse of $f(n) = h(n-1)-h(n)$, and \vspace{-3pt}
   {\fontsize{10}{12}\selectfont $$ h(n) = \begin{cases}\vspace{-3pt}2 & \mbox{ if } n \equiv 0 \!\!\! \mod 6\\\vspace{-3pt} 0 & \mbox{ if } n \equiv 1 \text{ or } 2 \!\!\! \mod 6\\\vspace{-3pt}1 & \mbox{ if } n \equiv 3 \text{ or } 4 \!\!\! \mod 6\\-1 & \mbox{ if } n \equiv 5 \!\!\! \mod 6.\end{cases} $$}
   It turns out $f^{-1}(n)$ is zero just as often as $\mu(n)$ with the added advantage that $f^{-1}(2) = f^{-1}(4) = 0$, meaning $2$ of the $4$ most computationally expensive summands need not be computed.
   The drawback is that no efficient way of computing $f^{-1}(n)$ was found.
   
   Lastly, an analytic approach could be considered. In 1987 Lagarias and Odlyzko described a way to compute $\pi(x)$, the number of primes $\leq x$, in $O(x^{1/2 + \varepsilon})$ time \cite{LO}.
   The algorithm uses a completely different approach, expressing $\pi(x)$ in terms of a contour integral in the complex plane.
   Moreover the discussion section in \cite{LO} states that the same ideas can be applied to compute $M(x)$ in the same time complexity.
   
   In $2010$, Platt computed $\pi(x)$ using this algorithm and stated the combinatorial algorithm for $\pi(x)$ would probably be faster until roughly $x \approx 4 \cdot 10^{31}$. This is due to overhead, some of which is from the need of multiple precision complex arithmetic \cite{PL}.
   It seems likely the analytic algorithm for $M(x)$ would follow suit.
   
   \subsection{Analytic}
   It has been shown $\liminf q(x) < -1.837625$ and $\limsup q(x) > 1.826054$.
   Extending these bounds further, with the same approach, would take a considerable amount of time.
   To see why, first notice all values found with fplll, using $(\delta, \eta) = (0.9999, 0.99985)$, resulted in bounds about $95.5\%$ of the optimum for a given $N$, i.e.
   $$ h \approx 1.91\sum_{i =1}^N a_i. $$
   Additionally, the runtime of fplll's algorithm scales as $O(N^{4+\varepsilon} \nu (N + \nu))$ \cite{FPLLL}.
   Thus given the timings of previous calls and assuming $\nu$ scales linearly with $N$, these observations can help estimate what is needed to reach a given bound:
   \begin{center}
   \begin{tabular}{ | c || c | c | }
   \hline
   bound & estimated $N$ & estimated time\\
   \hline
   $1.90$ & $865$ & $\phantom{1}2$ months\\
   $1.95$ & $985$ & $\phantom{1}5$ months\\
   $2.00$ & $1125$ & $10$ months\\
   \hline
   \end{tabular}
   \end{center}
   
   It therefore appears attaining bounds of $\pm2$ is within reach with existing hardware and algorithms.
   Attaining bounds larger than $2$ will most likely need $\rho_i$ and $\zeta'(\rho_i)$ computed to higher precision than what was achieved here, or different $(\delta, \eta)$ values.
   At present, a different approach is likely needed to substantially improve these bounds past $2$.
   
   \section{Appendix}
   \noindent Access all computed data in a Mathematica notebook at \url{https://wolfr.am/mertens}.
   
   \section{Acknowledgements}
   The author wishes to thank the Texas Advanced Computing Center for providing the computing power to compute $\rho_i$ and $\zeta'(\rho_i)$ to such high precision and Michael Trott for referring me to the TACC. Additionally, a thanks goes out to Matthew Gelber and Eric Rowland for offering suggestions and edits throughout the writing process of this paper. Lastly, the author wishes to acknowledge Daniel Fortunato for his collaborations during the inception of this project.
   


\begin{thebibliography}{99}
     
   \bibitem{EK}
     Eugene Kuznetsov, \emph{Computing the Mertens function on a GPU} 
     (arXiv:1108.0135v1 [math.NT], 31 Jul 2011)

   \bibitem{DR}
     M. Del\'eglise and J. Rivat, \emph{Computing the summation of the M\"obius function}
     Exp. Math. {\bf 5} (1996) 291\,--\,295.
     
   \bibitem{KL}
     T. Kotnik and J. van de Lune, \emph{Further systematic computations on the summatory function of the M\"obius function}
     Report MAS-R0313, CWI Amsterdam (November 2003)
     
   \bibitem{BT}
     D. G. Best and T. S. Trudgian, \emph{Linear relations of zeroes of the zeta-function}
     Math. Comp. Volume 84, {\bf 294} (July 2015) 2047\,--\,2058
     
   \bibitem{OR}
     A. M. Odlyzko and H. J. J. te Riele, \emph{Disproof of the Mertens conjecture}
     J. reine angew. Math. {\bf 357} (1985) 138\,--\,160.
   
   \bibitem{T}
      E. C. Titchmarsh, \emph{The Theory of the Riemann Zeta-Function},
      Oxford Univ. Press (1951).
   
   \bibitem{I}
      A. E. Ingham, \emph{On two conjectures in the theory of numbers},
      Amer. J. Math. {\bf 64} (1942) 313\,--\,319.
     
   \bibitem{GM}
     T. Granlund and P. Montgomery, \emph{Division by Invariant Integers using Multiplication}
     Proceedings of SIGPLAN `94 Conference on Programming Language Design and Implementation
     
   \bibitem{RS}
     Richard Sladkey, \emph{A Successive Approximation Algorithm for Computing the Divisor Summatory Function} 
     (arXiv:1206.3369 [math.NT], Jun 2012) 13\,--\,16
   
   \bibitem{FPLLL}
     The FPLLL development team, \emph{fplll, a lattice reduction library} 
     (available at \url{https://github.com/fplll/fplll}) (January 2016)
   
   \bibitem{MSV}
     I. Morel, D. Stehle, and G. Villard \emph{H-LLL: using Householder inside LLL}
     ISSAC (2009) 271\,--\,278
   
   \bibitem{EG}
     Emil Grosswald \emph{Oscillation Theorems of Arithmetical Functions}
     Transactions of the American Mathematical Society {\bf 126} (1967)
   
   \bibitem{NG}
     Nathan Ng \emph{The Distribution of the Summatory Function of the M\"obius Function}
     Proc. London Math. Soc. (2004) {\bf 89} 361\,--\,389
   
   \bibitem{KJ}
     T. Kotnik and J. van de Lune, \emph{On the order of the Mertens function}
     Experimental Mathematics {\bf 13} 473\,--\,481 (December 2003)
   
   \bibitem{BV}
     M. Banito and J. L. Varona \emph{Recursive Formulas Related to the Summation of the M\"obius Function}
     The Open Mathematics Journal {\bf 1} (2008) 25\,--\,34
   
   \bibitem{LO}
     J. C. Lagarias and A. M. Odlyzko \emph{Computing $\pi(x)$: An Analytic Method}
     J. Algorithms {\bf 8} (1987) 173\,--\,191.
   
   \bibitem{PL}
     David Platt  \emph{Computing $\pi(x)$ Analytically}
     Math. Comp. {\bf 84} (2015) 1521\,--\,1535

   \end{thebibliography}
\end{document}